\newtheorem{theorem}{Theorem}[section]
\newtheorem{lemma}[theorem]{Lemma}
\newtheorem{proposition}[theorem]{Proposition}
\newtheorem*{repthm:min_existence}{Theorem \ref{thm:min_existence}}
\theoremstyle{definition}
\newtheorem{definition}[theorem]{Definition}
\theoremstyle{remark}
\newtheorem{remark}[theorem]{Remark}
\definecolor{codegreen}{rgb}{0, 0.6, 0}  
\lstdefinestyle{minimalcode}{
    language=Python,
    basicstyle=\ttfamily\small,
    commentstyle=\color{codegreen},
    showstringspaces=false,
    breaklines=true,
    tabsize=4,
    frame=lines,
    captionpos=b
}
\pretocmd{\lstlisting}{\vspace{1em}}{}{}
\apptocmd{\endlstlisting}{\vspace{1em}}{}{}
\title{A primal-dual price-optimization method for computing equilibrium prices in mean-field games models}
\author[1]{Xu Wang}
\author[2]{Samy Wu Fung}
\author[1]{Levon Nurbekyan\thanks{Corresponding author. \href{mailto:lnurbek@emory.edu}{lnurbek@emory.edu}}}
\affil[1]{Department of Mathematics, Emory University, Atlanta, GA, USA}
\affil[2]{Department of Applied Mathematics and Statistics, Colorado School of Mines, Golden, CO, USA}
\begin{document}
\maketitle

\begin{abstract}
We develop a simple yet efficient Lagrangian method for computing equilibrium prices in a mean-field game price-formation model. We prove that equilibrium prices are optimal in terms of a suitable criterion and derive a primal-dual gradient-based algorithm for computing them. One of the highlights of our computational framework is the efficient, simple, and flexible implementation of the algorithm using modern automatic differentiation techniques. Our implementation is modular and admits a seamless extension to high-dimensional settings with more complex dynamics, costs, and equilibrium conditions. Additionally, automatic differentiation enables a versatile algorithm that requires only coding the cost functions of agents. It automatically handles the gradients of the costs, thereby eliminating the need to manually form the adjoint equations.

\vskip\baselineskip

\noindent \textbf{Keywords:} mean-field games, price formation, calculus of variations, primal-dual algorithms, auotomatic differentiation
\end{abstract}

\section{Introduction}

We are interested in numerical approximations of equilibrium configurations of a class of mean-field game price formation models. Specifically, we assume there is an economy of a continuum of identical agents that optimize their costs for acquiring and selling a single asset in a market. 
Consequently, agents interact indirectly through price formation in the market, where an equilibrium corresponds to a market clearing.

While there are numerous mean-field game price formation models, we focus on the model introduced in~\cite{gomes2016acc,gomessaude'18}, also further discussed in~\cite{ashrafyan2021duality,ashrafyan22potential,ashrafyan2022variational,ashrafyan2024fullydiscrete,gomes2023machine0,gomes23machine,gomes23random}. 
In this model, single agents solve the optimization problem
\begin{equation}\label{eq:oc_single_general}
	\begin{split}
		&\inf_{\alpha} \int_0^T \left[L(z(t),\alpha(t))+\alpha(t)\omega(t) \right] dt+g(z(T))\\
		\mbox{s.t.}~&\dot{z}(t)=\alpha(t),\quad z(0)=x,
	\end{split}
\end{equation}
where $x \in \mathbb{R}$ is the initial state of the agent, representing the initial level of their asset, $\alpha(t)\in \mathbb{R}$ represents the rate at which the agent acquires ($\alpha(t)>0$) or sells ($\alpha(t)<0$) the asset at time $t\in (0,T)$, and $z(t)\in \mathbb{R}$ represents the state (level of the asset) of the agent at time $t$. Furthermore, $L \colon \mathbb{R} \times \mathbb{R} \to \mathbb{R}$ represents the running cost of trading the asset, and $g\colon \mathbb{R} \to \mathbb{R}$ represents the terminal (final) cost the agent has to pay. Finally, $\omega(t)$ is the price of the asset and the term $\alpha(t) \omega(t)$ is the cost for trading at rate $\alpha(t)$. 

Note that the symmetry among the agents yields that optimal trading rates depend only on the price and current asset level of an agent. Thus, $x$ and $\omega$ are the only parameters in the optimization problem~\eqref{eq:oc_single_general}, and we denote by $\alpha^*_{x,\omega}(t),~t\in (0,T)$ a solution of~\eqref{eq:oc_single_general}.
Next, the model assumes that the initial distribution of agents' states (assets) is $\rho_0 \in \mathcal{P}_{ac}(\mathbb{R})$, where $\mathcal{P}_{ac}(\mathbb{R})$ is the space of all compactly supported probability measures which is absolutely continuous with respect to the Lebesgue measure. 
Hence, the total amount of traded assets at time $t$ is equal to
\begin{equation*}
    \int_{\mathbb{R}} \alpha^*_{x,\omega}(t) \rho_0(x)dx.
\end{equation*}
Assuming that the total supply of the asset at time $t$ is $Q(t)$, the market clearing condition reads as 
\begin{equation}\label{eq:market_clearing}
     \int_{\mathbb{R}} \alpha^*_{x,\omega}(t) \rho_0(x)dx=Q(t),\quad \forall t\in (0,T).
\end{equation}
This condition characterizes equilibrium configurations in the model.

\begin{definition}
    $\omega^*$ is an equilibrium price if the market clearing condition~\eqref{eq:market_clearing} holds for $\omega=\omega^*$, where $\alpha^*_{x,\omega}(t),~t\in (0,T)$ represents optimal trading rates in the presence of price $\omega$.
\end{definition}

Our goal is to develop an algorithm for computing equilibrium prices. The basis of our approach is a variational formulation for equilibrium prices. Let
\begin{equation}\label{eq:phi_omega}
    \begin{split}
		\phi_\omega(x)=&\inf_{\alpha} \int_0^T \left[L(z(t),\alpha(t))+\alpha(t)\omega(t) \right] dt+g(z(T))\\
		\mbox{s.t.}~&\dot{z}(t)=\alpha(t),\quad z(0)=x,
	\end{split}
\end{equation}
be the optimal \textit{individual cost} for an initial asset level $x$ when the price function is $\omega$. Next, consider the functionals
\begin{equation}\label{eq:J}
	J[\omega]=\int_{\mathbb{R}} \phi_\omega(x) \rho_0(x)dx,
\end{equation}
and
\begin{equation}\label{eq:I}
    I[\omega]=\int_0^T Q(t)\omega(t)dt - J[\omega].
\end{equation}
Note that $J[\omega]$ represent the optimal \textit{population cost} in the presence of a price function $\omega$. 
It turns out that equilibrium prices are the minimizers of the optimization problem
\begin{equation}\label{eq:mfg_var}
	\inf_\omega I[\omega]=\inf_\omega \int_0^T Q(t)\omega(t)dt - J[\omega].
\end{equation}

The key result that establishes this variational formulation is the relation between the total amount of traded assets and the rate of change of the optimal population cost with respect to the price function. Mathematically, the relation reads as
\begin{equation}\label{eq:dJ_intro}
\delta J[\omega](t)=\int_{\mathbb{R}} \alpha^*_{x,\omega}(t)\rho_0(x)dx, \quad t\in (0,T),    
\end{equation}
where $\delta J[\omega]$ is the Fr\'{e}chet derivative of $J[\omega]$ with respect to $\omega$.

Equation~\eqref{eq:dJ_intro} immediately yields the Fr\'{e}chet derivative of the functional $I[\omega]$ with respect to $\omega$ and implies that the equilibrium prices correspond to the critical points of the functional $I[\omega]$. Thus, our main theoretical result is Theorem~\ref{thm:min_existence}, which we restate here. Assume that
\begin{equation}\label{eq:L}
    L(z,\alpha)=c_0\frac{\alpha^2}{2}+V(z),
\end{equation}
where $c_0>0$, and $V$ is a twice differentiable function with globally bounded first and second derivatives; that is,
\begin{equation}\label{eq:V_bounds}
    \sup_{z\in \mathbb{R}} |V'(z)|+|V''(z)| <\infty.
\end{equation}
Additionally, assume that $g$ is also twice differentiable with globally bounded first and second derivatives:
\begin{equation}\label{eq:g_bounds}
    \sup_{z\in \mathbb{R}} |g'(z)|+|g''(z)| <\infty.
\end{equation}
Then our main theoretical result is as follows.

\begin{repthm:min_existence}
Assume that~\eqref{eq:L},~\eqref{eq:V_bounds},~\eqref{eq:g_bounds} hold. Then, for all $Q\in L^2(0,T)$, the functional $I$ admits a minimizer. Moreover, for every minimizer $\omega^*$, we have that $\omega^*+c_0 Q \in C^1([0,T])$. 

In particular, for every supply $Q\in L^2(0,T)$ there exists an equilibrium price $\omega^*$, which is as smooth as $Q$, up to $C^1([0,T])$.
\end{repthm:min_existence}

\begin{remark}
    Hypotheses~\eqref{eq:L},~\eqref{eq:V_bounds}, and~\eqref{eq:g_bounds} are the standing assumptions of this work, and all theoretical results and numerical experiments are developed under these conditions. The quadratic dependence on the control simplifies parts of the analysis (e.g., Propositions~\ref{prp:min_uniqueness} and~\ref{prp:min_stability}). 

    Extensions to more general running costs appear feasible using techniques from~\cite{nurbekyan_oc_hilbert}. In such cases, explicit computations (for instance, involving the function $a$ in Proposition~\ref{prp:min_uniqueness}) would be replaced by derivations based on the adjoint (momentum) variable 
    \[
        p = -\partial_\alpha L(z,\dot{z}) - \omega,
    \]
    and the Euler–Lagrange equation would be complemented with the corresponding Hamiltonian system. 

    Finally, since the running cost depends linearly on $\omega$, the arguments in Proposition~\ref{prp:phi_omega} remain unaffected. A detailed investigation of these extensions is left for future work.
\end{remark}

Furthermore, our main computational result is the primal-dual Algorithm~\ref{alg:main}, which is based on this previous theorem. More specifically, we first rewrite~\eqref{eq:mfg_var} as a saddle-point problem
\begin{equation}\label{eq:saddle_cont}
    \begin{split}
        &\inf_\omega \sup_\alpha \mathcal{L}(\omega,\alpha)\\
        =& \inf_\omega \sup_\alpha\int_0^T \omega(t) Q(t)dt-\int_{\mathbb{R}} \left[ \int_0^T \left(L(z_{x,\alpha}(t),\alpha_x(t))+\alpha_x(t)\omega(t)\right)dt+g(z_{x,\alpha}(T)) \rho_0(x)dx\right],
    \end{split}
\end{equation}
where
\begin{equation*}
\dot{z}_{x,\alpha} (t)=\alpha_x(t),\quad t\in (0,T),\quad z_{x,\alpha}(0)=x.  
\end{equation*}
Next, we use a version of the primal-dual hybrid gradient (PDHG)~\cite{champock11,champock16} to solve~\eqref{eq:saddle_cont}. In particular, we replace the proximal ascent steps for $\alpha$ by gradient ascent steps, which yields Algorithm~\ref{alg:main} below. Note that we perform proximal descent steps in $\omega$, as in the original PDHG algorithm, which are optimization steps with quadratic (norm-squared) penalty terms (line 5 in Algorithm~\ref{alg:main}).

\begin{algorithm}[H]
\caption*{\textbf{Algorithm~\ref{alg:main}} Primal-dual algorithm for a mean-field game price formation model}
\begin{algorithmic}[1]
\Require Lagrangian $L$, terminal cost function $g$, initial density $\rho_0$, supply function $Q$; step sizes $\tau_\alpha,\tau_\omega > 0$.
\State Initialize $\alpha^{(0)}$, $\overline{\alpha}^0$, $\omega^{(0)}$.
\For{$k = 0, 1, 2, \dots$}
    \State $\alpha^{(k+1)} \gets \alpha^{(k)} + \tau_{\alpha} \nabla_{\alpha} \mathcal{L}(\omega^{(k)}, \alpha)$ (gradient computed via \textbf{automatic differentiation})
    \State $\overline{\alpha}^{(k+1)} \gets 2\alpha^{(k+1)} - \alpha^{(k)}$
    \State $\omega^{(k+1)} \gets \displaystyle\arg\min_{\omega} \mathcal{L}(\omega, \overline{\alpha}^{(k+1)}) + \dfrac{1}{2\tau_\omega} \int_0^T |\omega(t) - \omega^{(k)}(t)|^2 dt $
\EndFor
\end{algorithmic}
\end{algorithm}

Here, we state the continuous version of Algorithm~\ref{alg:main} for the convenience of the reader and to minimize the introduction of new notation. In the discrete version of the algorithm, we replace integrals and ODEs by numerical integration and finite-difference equations, respectively; a detailed discussion of Algorithm~\ref{alg:main} is in Section~\ref{sec:alg}. 

A critical aspect of our implementation of Algorithm~\ref{alg:main} is that the gradient $\nabla_{\alpha} \mathcal{L}(\omega^{(k)}, \alpha)$ is computed via modern automatic differentiation tools. Thus, for instance, we avoid forming the adjoint equations manually. This approach yields an accurate and flexible algorithmic framework that requires coding only the objective function $\mathcal{L}(\omega,\alpha)$ and eliminates the necessity of manually re-deriving adjoint equations for new problem data such as Lagrangians, terminal costs, and dynamics. Listing~\ref{code:primal_dual} contains the code block for the implementation of the main loop in Algorithm~\ref{alg:main} in PyTorch.

As one can see, we only need to specify that among all input variables of the objective function $\verb|compute_objective()|$, we track only $\verb|alpha|$ when computing its gradient via $\verb|.backward()|$. Hence, for a modified problem data, one only needs to update the objective function $\verb|compute_objective()|$, while the module in Listing~\ref{code:primal_dual} remains unchanged. More details and a complete implementation of Algorithm~\ref{alg:main} are available at~\href{https://github.com/lnurbek/price-mfg-solver}{https://github.com/lnurbek/price-mfg-solver}.

\begin{lstlisting}[style=minimalcode, caption={PyTorch code block for the implementation of Algorithm~\ref{alg:main}}, label={code:primal_dual}]
for _ in range(num_iters):
    # Step 1: Create a fresh alpha tensor with gradient tracking
    alpha = alpha_prev.clone().detach().requires_grad_(True)

    # Step 2: Compute objective and gradient
    loss = compute_objective(alpha, omega, x0, dt, sigma, Q, L, g)
    loss.backward()
    grad_alpha = alpha.grad.detach()

    # Step 3: Primal update
    alpha_new = alpha.detach() + (tau_alpha * M / dt) * grad_alpha

    # Step 4: Extrapolation
    alpha_bar = 2 * alpha_new - alpha_prev

    # Step 5: Dual update
    omega = omega + tau_omega * (alpha_bar.mean(dim=0) - Q)

    # Step 6: Prepare for next iteration
    alpha_prev = alpha_new

return alpha_prev, omega
\end{lstlisting}

The remainder of the paper is organized as follows. In Sections~\ref{subsec:related} and~\ref{subsec:modeling}, we discuss the related work and modeling considerations. Furthermore, in Section~\ref{sec:analysis_single}, we analyze theoretical properties of the single-agent optimal control problem~\eqref{eq:oc_single_general}, which are required for further analysis and the proof of the main Theorem~\ref{thm:min_existence}. Section~\ref{sec:I} contains the analysis of the properties of the functional $I$~\eqref{eq:I} and the proof of Theorem~\ref{thm:min_existence}. Next, in Section~\ref{sec:alg}, we introduce the Algorithm~\ref{alg:main}, and in Section~\ref{sec:experiments} we perform numerical experiments to validate our findings. Section~\ref{sec:conclusion} contains concluding remarks. Appendix~\ref{apdx:analytic} contains the derivations of the analytic solutions for a class of linear quadratic models used for benchmarking our algorithm in Section~\ref{sec:experiments}.

\subsection{Related work}\label{subsec:related}

The price formation model considered in this manuscript is closely related to the ones considered in~\cite{gomes2016acc,gomessaude'18,paola19,bonnans2021schauder,graber2021weak} and developed in subsequent works. In~\cite{bonnans2021schauder,graber2021weak}, such models are referred to as \textit{mean-field games of controls}, since the agents are coupled through the distribution of their controls, $\alpha^*_{x,\omega}(t)$, rather than their states as in the earlier MFG models~\cite{LasryLions06a,LasryLions06b,LasryLions2007,HCM06,HCM07}.

In~\cite{ashrafyan22potential,ashrafyan2022variational}, the authors find a variational formulation for the PDE system characterizing the equilibrium price. The variational formulation derived by the authors relies on the characterization of divergence-free vector fields in two space-dimensions via Poincar\'{e}'s lemma, which does not extend to higher-dimensional settings. In contrast, the variational formulation developed here does not depend on the dimensionality and extends to the higher-dimensional setting.

In~\cite{gomes23random,gomes2023machine0,gomes23machine,alharbi_et_al19price,ashrafyan2024fullydiscrete}, the authors rely on the \textit{dual} optimization problem of~\eqref{eq:mfg_var}, where the trajectories are the optimization variables, and the price variable plays a role of a Lagrange multiplier. Thus, they assume the convexity of the running and terminal costs to ensure the dual problem is convex with respect to the trajectories, and yields a unique solution. Our approach does not require the convexity of the running and terminal costs; indeed, $I[\omega]$ is convex regardless of these assumptions, and our approach is not sensitive to the non-uniqueness of the trajectories.

In~\cite{bonnans2021schauder,graber2021weak,bonnans2023lagrangian,bonnans2023discrete,lavigne2023generalized}, the authors address theoretical and computational aspects of related price formation (mean-field games of controls) models, where the price $\omega$ is an \textit{explicit function} of the total amount of traded assets; that is,
\begin{equation*}
\omega(t)=\psi\left( \int_{\mathbb{R}} \alpha^*_{x,\omega}(t) \rho_0(x)dx\right),\quad t\in (0,T),    
\end{equation*}
where $\psi$ is a given \textit{price function}. Recall that in our case the price is \textit{implicitly} determined by a market clearing condition~\eqref{eq:market_clearing} as in~\cite{gomes2016acc,gomessaude'18} and follow-up works.

The works~\cite{bonnans2021schauder,graber2021weak,bonnans2023lagrangian,bonnans2023discrete,lavigne2023generalized} also tend to focus on the trajectories as primal optimization variables but do not require convexity of the running and terminal costs, thanks to working with the probability measures that support the trajectories.

\subsection{Modeling considerations}\label{subsec:modeling}

In this work, we do not impose constraints on agents' states and controls. However, real-world applications and deployment of price formation models require such constraints. We expect our approach to accommodate state and control constraints through the extension of the results in Section~\ref{sec:analysis_single} to the constrained setting. Indeed, there is a well-developed theory of the optimal control theory with state and control constraints~\cite{cannarsa2018bfc11smoothnessconstrainedsolutionscalculus,bonnans2023lagrangian}, which warrants the qualitative properties of the functional $I[\omega]$ that are essential for our main results. We will address the constrained case in our future work.



\section{Analysis of the single-agent optimization problem}\label{sec:analysis_single}

Assuming~\eqref{eq:L}, the optimization problem~\eqref{eq:phi_omega} reduces to
\begin{equation}\label{eq:phi_omega_1}
    \begin{split}
		\phi_\omega(x)=&\inf_{\alpha \in L^2(0,T)} \int_0^T \left[c_0 \frac{\alpha(t)^2}{2}+\alpha(t)\omega(t)+V(z(t)) \right] dt+g(z(T))\\
		\mbox{s.t.}~&\dot{z}(t)=\alpha(t),\quad z(0)=x\\
  =&\inf_{z \in H^1(0,T)}\left\{ \int_0^T \left[c_0 \frac{\dot{z}(t)^2}{2}+\dot{z}(t)\omega(t) +V(z(t))\right] dt+g(z(T))~\text{s.t.}~z(0)=x\right\}\\
	\end{split}
\end{equation}
Note that we eliminated the control $\alpha$ using the time-derivative of the state; this yields a lighter notation. Additionally, we specified the space for the controls to be $L^2(0,T)$, which imples the state trajectory is in the Sobolev space $H^1(0,T)$.
An elementary inequality
\begin{equation}\label{eq:elementary}
    |z(t')-z(t)|\leq \int_t^{t'} |\dot{z}(s)|ds\leq \sqrt{|t'-t|} ~\|\dot z\|_{2}
\end{equation}
yields that $H^1(0,T)$ functions are H\"{o}lder-$\frac{1}{2}$ continuous and have well-defined values everywhere in $[0,T]$. In particular, we can impose the left endpoint constraint $z(0)=x$ and a terminal cost $g(z(T))$. Finally, $\omega\in L^2(0,T)$ implies that the integral cost in~\eqref{eq:phi_omega_1} is finite for all $z\in H^1(0,T)$.

Our first task is to show that~\eqref{eq:phi_omega_1} admits minimizers for all $x\in \mathbb{R}$. The notation $C(q_1,q_2,\cdots)$ denotes positive constants that depend on quantities $q_1,q_2,\cdots$. For the ease of presentation, we will not specify the dependence of such constants on problem data, such as $c_0,V,g,T$. 

\begin{lemma}\label{lma:existence_mins}
    Let $\omega \in L^2(0,T)$ and $x\in \mathbb{R}$ be arbitrary. Then~\eqref{eq:phi_omega_1} admits a minimizer. Moreover, all minimizers $z^*(t,x)$ satisfy the bounds
    \begin{equation}\label{eq:min_bounds}
    \begin{split}
        \|\dot{z}^*(\cdot,x)\|_{2}\leq & C(\|\omega\|_2),\\
        |z^*(t',x)-z^*(t,x)|\leq & C(\|\omega\|_2)~\sqrt{|t'-t|},\quad \forall t,t'\in [0,T],\\
        |z^*(t,x)|\leq& |x|+ C(\|\omega\|_2),\quad \forall t\in [0,T].
    \end{split}
    \end{equation}
\end{lemma}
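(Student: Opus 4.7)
My plan is to apply the direct method of the calculus of variations. The one non-routine step is a coercivity estimate in which the $x$-dependence cancels; I would obtain it by comparing any competitor against the constant trajectory $z\equiv x$.

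Write $F[z]$ for the functional in~\eqref{eq:phi_omega_1} and $F_x = T V(x) + g(x)$ for its value at $z\equiv x$; in particular $\phi_\omega(x)\le F_x$. For any admissible $z$,
\begin{equation*}
F[z]-F_x = \int_0^T \tfrac{c_0}{2}\dot z(t)^2\,dt + \int_0^T \dot z(t)\omega(t)\,dt + \int_0^T \bigl(V(z(t))-V(x)\bigr)\,dt + \bigl(g(z(T))-g(x)\bigr).
\end{equation*}
The mean-value theorem together with~\eqref{eq:V_bounds} and~\eqref{eq:g_bounds} bounds $|V(z(t))-V(x)|$ and $|g(z(T))-g(x)|$ by a constant multiple of $|z(t)-x|\le \sqrt{T}\,\|\dot z\|_2$ from~\eqref{eq:elementary}, while Cauchy--Schwarz handles the $\dot z\,\omega$ term. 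Combining gives
\begin{equation*}
F[z]-F_x \;\ge\; \tfrac{c_0}{2}\|\dot z\|_2^2 \;-\; C(\|\omega\|_2)\,\|\dot z\|_2,
\end{equation*}
where $C(\|\omega\|_2)$ depends only on $\|\omega\|_2$ and on the problem data. Hence $F$ is bounded below and coercive in $\|\dot z\|_2$ along every minimizing sequence, with a constant independent of $x$.

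For existence, take a minimizing sequence $\{z_n\}$. The coercivity estimate yields $\|\dot z_n\|_2 \le C(\|\omega\|_2)$ for all sufficiently large $n$, and together with $z_n(0)=x$ this produces a uniform bound in $H^1(0,T)$. Pass to a subsequence converging weakly in $H^1(0,T)$ and, by Rellich compactness, uniformly on $[0,T]$ to some $z^*\in H^1(0,T)$ with $z^*(0)=x$. The quadratic kinetic term is weakly lower semi-continuous (it is convex and continuous on $L^2$); the linear term $\int_0^T \dot z\,\omega\,dt$ is weakly continuous in $\dot z$ since $\omega\in L^2(0,T)$; and $\int_0^T V(z_n)\,dt$, $g(z_n(T))$ pass to the limit by uniform convergence and continuity of $V$ and $g$. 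Therefore $F[z^*]\le \liminf_n F[z_n]=\phi_\omega(x)$, so $z^*$ is a minimizer.

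The bounds~\eqref{eq:min_bounds} then drop out. Applying the coercivity inequality to any minimizer $z^*$ (which satisfies $F[z^*]\le F_x$) gives $\|\dot z^*\|_2\le C(\|\omega\|_2)$; the H\"older estimate follows from~\eqref{eq:elementary}; the pointwise bound follows from $z^*(t,x)=x+\int_0^t \dot z^*(s,x)\,ds$ together with Cauchy--Schwarz. The main obstacle is obtaining an $x$-independent coercivity constant: a naive lower bound using the linear growth of $V$ and $g$ picks up an $|x|$-term that cannot be absorbed into the quadratic part. Subtracting $F_x$ first replaces the $V$- and $g$-evaluations by Lipschitz-controlled \emph{differences} involving $z-x$, which~\eqref{eq:elementary} bounds in terms of $\|\dot z\|_2$; once this trick is in place, the remainder is the textbook direct method.
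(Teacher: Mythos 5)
Your proposal is correct and follows essentially the same route as the paper: both obtain an $x$-independent coercivity estimate by measuring the cost relative to the constant trajectory $z\equiv x$ (so that only Lipschitz-controlled differences of $V$ and $g$ appear, bounded via~\eqref{eq:elementary}), and then run the standard direct method with weak $H^1$ compactness and lower semicontinuity. The only difference is cosmetic — the paper absorbs the linear terms via Young's inequality into a clean quadratic lower bound, while you keep the $-C(\|\omega\|_2)\|\dot z\|_2$ term explicit — which changes nothing of substance.
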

\begin{proof} We break the proof into several steps.

\noindent \textit{Claim 1.} For every $z\in H^1(0,T)$ such that $z(0)=x$ we have that
\begin{equation}\label{eq:trejactory_lower_bound}
    \int_0^T \left[c_0 \frac{\dot{z}(t)^2}{2}+\dot{z}(t)\omega(t) +V(z(t))\right] dt+g(z(T))\geq T\cdot V(x)+g(x)+ C\|\dot{z}\|_2^2-C\|\omega\|^2-C
\end{equation}
\begin{proof}[Proof of Claim 1]
Firstly, we have that
\begin{equation*}
    \begin{split}
        \int_0^T V(z(t)) dt +g(z(T))\geq & \int_0^T V(x) dt +g(x)-C\int_0^T |z(t)-x|dt-C|z(T)-x|\\
        \geq & T\cdot V(x)+g(x)- C \|\dot{z}\|_2,
    \end{split}
\end{equation*}
where we used~\eqref{eq:elementary} and $z(0)=x$. Next, we have that
\begin{equation*}
    \begin{split}
        \int_0^T \left[c_0 \frac{\dot{z}(t)^2}{2}+\dot{z}(t)\omega(t) \right]dt\geq \int_0^T \left[c_0 \frac{\dot{z}(t)^2}{4}-\frac{\omega(t)^2}{c_0} \right]dt=\frac{c_0\|\dot{z}\|_2^2}{4}-\frac{\|\omega\|_2^2}{c_0}.
    \end{split}
\end{equation*}
The combination of the previous two inequalities and
\begin{equation*}
    \frac{c_0\|\dot{z}\|_2^2}{4}-C\|\dot{z}\|\geq \frac{c_0\|\dot{z}\|_2^2}{8}-C
\end{equation*}
yields~\eqref{eq:trejactory_lower_bound}.
\end{proof}

\noindent \textit{Claim 2.} Bounds~\eqref{eq:min_bounds} hold for all minimizers.
\begin{proof}[Proof of Claim 2.]
    Let us first prove~\eqref{eq:min_bounds}. Note that~\eqref{eq:elementary} and the triangle inequality imply the second and third inequalities if one proves the first one in~\eqref{eq:min_bounds}. Assuming that $z^*(x,t)$ is a minimizer for~\eqref{eq:phi_omega_1}, we have that
    \begin{equation*}
        \int_0^T \left[c_0 \frac{\dot{z}^*(t,x)^2}{2}+\dot{z}^*(t,x)\omega(t) +V(z^*(t,x))\right] dt+g(z^*(T,x))\leq T\cdot V(x)+g(x),
    \end{equation*}
    where we compared $z^*(t,x)$ with $\tilde{z}(t)\equiv x$. Furthermore, applying~\eqref{eq:trejactory_lower_bound} to $z^*(t,x)$ we obtain
    \begin{equation*}
        T\cdot V(x)+g(x)+ C\|\dot{z}^*(\cdot,x)\|_2^2-C\|\omega\|^2-C\leq T\cdot V(x)+g(x),
    \end{equation*}
    which yields the first inequality in~\eqref{eq:min_bounds}.
\end{proof}

\noindent \textit{Claim 3.} Problem~\eqref{eq:phi_omega_1} admits a minimizer.
\begin{proof}[Proof of Claim 3]
    Firstly,~\eqref{eq:trejactory_lower_bound} yields that
\begin{equation*}
    \int_0^T \left[c_0 \frac{\dot{z}(t)^2}{2}+\dot{z}(t)\omega(t) +V(z(t))\right] dt+g(z(T)) \geq T\cdot V(x)+g(x)-C\|\omega\|_2^2-C,
\end{equation*}
and so the infimum $\phi_\omega(x)$ in~\eqref{eq:phi_omega_1} is finite:
\begin{equation}\label{eq:phi_omega_bounds}
    T\cdot V(x)+g(x)\geq \phi_\omega(x)\geq T\cdot V(x)+g(x)-C\|\omega\|_2^2-C,
\end{equation}
where the upper bound comes from the cost of the constant trajectory $\tilde{z}(t)\equiv x$.

Now assume that $z_n \in H^1(0,T)$ is a minimizing sequence for~\eqref{eq:phi_omega_1}. Then we have that 
\begin{equation*}
    \int_0^T \left[c_0 \frac{\dot{z}_n(t)^2}{2}+\dot{z}_n(t)\omega(t) +V(z_n(t))\right] dt+g(z_n(T)) \leq C,\quad \forall n \geq 1.
\end{equation*}
Invoking~\eqref{eq:trejactory_lower_bound} again, we find that
\begin{equation*}
    T\cdot V(x)+g(x)+C \|\dot{z}_n\|_2^2-C\|\omega\|_2^2-C\leq C \Longrightarrow \|\dot{z}_n\|_2 \leq C(\|\omega\|_2).
\end{equation*}
Furthermore, taking into account~\eqref{eq:elementary}, $z_n(0)=x$, and the triangle inequality, we find that
\begin{equation*}
    |z_n(t)-z_n(t')|\leq \sqrt{|t'-t|}~C(\|\omega\|_2),\quad |z_n(t)|\leq |x|+\sqrt{t}~C(\|\omega\|_2),\quad\forall t,t'\in (0,T).
\end{equation*}
Combining the previous inequalities, we obtain that $\|z_n\|_2\leq C(\|\omega\|_2)$, and so $\{z_n\}$ is
\begin{itemize}
    \item a bounded sequence in $H^1(0,T)$,
    \item a uniformly bounded and equicontinuous sequence in $C([0,T])$.
\end{itemize}
Hence, there exists a $z^*(\cdot,x) \in H^1(0,T)$ such that $z^*(0,x)=x$, and up to a subsequence $\{z_n\}$
\begin{itemize}
    \item converges to $z^*(\cdot,x)$ weakly in $H^1(0,T)$,
    \item converges to $z^*(\cdot,x)$ in $C([0,T])$.
\end{itemize}
Hence, we obtain that
\begin{equation*}
    \begin{split}
        &\liminf_{n \to \infty} \int_0^T \left[c_0 \frac{\dot{z}_n(t)^2}{2}+\dot{z}_n(t)\omega(t) +V(z_n(t))\right] dt+g(z_n(T))\\
        \geq &  \int_0^T \left[c_0 \frac{\dot{z}^*(t,x)^2}{2}+\dot{z}^*(t,x)\omega(t) +V(z^*(t,x))\right] dt+g(z^*(T,x)).
    \end{split}
\end{equation*}
Since $\{z_n\}$ is a minimizing sequence, this last inequality yields that $z^*(\cdot,x)$ is a minimizer for~\eqref{eq:phi_omega_1}.
\end{proof}
The proof of of Lemma~\ref{lma:existence_mins} is now complete.
\end{proof}

Next, we study finer properties of the minimizers of~\eqref{eq:phi_omega_1}.
\begin{proposition}\label{prp:min_uniqueness}
    Let $\omega \in L^2(0,T)$ and $x\in \mathbb{R}$ be arbitrary, and $z^*(\cdot,x)$ be a minimizer of~\eqref{eq:phi_omega_1}. Then the function
    \begin{equation}\label{eq:a}
        a(t)=c_0 z^*(t,x)+\int_0^t \omega(s)ds,\quad t\in [0,T]
    \end{equation}
    is twice continuously differentiable and satisfies the second order ordinary differential equation
    \begin{equation}\label{eq:a_ode}
        \begin{cases}
            \ddot{a}(t)=V'(z^*(t,x)),~t\in (0,T),\\
            a(0)=c_0 x,~\dot{a}(T)=-g'(z^*(T,x)).
        \end{cases}
    \end{equation}
        Moreover, if $\phi_\omega$ is differentiable at $x$ then $a'(0)=-\phi_\omega'(x)$, and $z^*(t,x)$ is the unique minimizer of~\eqref{eq:phi_omega_1}.
\end{proposition}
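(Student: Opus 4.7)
The plan is to read off the ODE and boundary conditions from a first-variation calculation, recognizing that $\dot a$ is precisely the conjugate momentum $\partial_{\dot z}L=c_0\dot z^*+\omega$; then to derive the formula $\dot a(0)=-\phi_\omega'(x)$ from a one-parameter comparison argument; and finally to deduce uniqueness from Cauchy--Lipschitz applied to the resulting second-order Cauchy problem for $a$.

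Let $z^*(\cdot,x)$ be a minimizer and pick any $\eta\in H^1(0,T)$ with $\eta(0)=0$, so $z^*+\epsilon\eta$ stays admissible. Under the regularity assumptions~\eqref{eq:V_bounds}--\eqref{eq:g_bounds}, differentiating the cost in $\epsilon$ at $\epsilon=0$ gives
\begin{equation*}
\int_0^T \dot a(t)\,\dot\eta(t)\,dt + \int_0^T V'(z^*(t,x))\,\eta(t)\,dt + g'(z^*(T,x))\,\eta(T) = 0,
\end{equation*}
where I use that the weak derivative $\dot a=c_0\dot z^*+\omega$ belongs to $L^2(0,T)$. Testing against $\eta\in C_c^\infty(0,T)$ gives $\ddot a=V'(z^*(\cdot,x))$ distributionally; since $z^*(\cdot,x)$ is continuous by Lemma~\ref{lma:existence_mins} and $V'$ is continuous, the right-hand side is continuous, so $\dot a\in C^1([0,T])$ and the ODE holds classically. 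Substituting this back into the weak identity, integration by parts cancels the $V'$ term and leaves $(\dot a(T)+g'(z^*(T,x)))\,\eta(T)=0$ for arbitrary $\eta(T)$, yielding the transversality $\dot a(T)=-g'(z^*(T,x))$; the initial condition $a(0)=c_0 x$ is immediate from the definition.

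For the formula $\dot a(0)=-\phi_\omega'(x)$, the key observation is that for any $h\in\mathbb R$ the shifted trajectory $z^*(\cdot,x)+h$ starts at $x+h$ with the same velocity as $z^*$, hence is admissible for the problem defining $\phi_\omega(x+h)$. Comparing costs and telescoping yields
\begin{equation*}
\phi_\omega(x+h)-\phi_\omega(x)\leq \int_0^T\bigl(V(z^*(t,x)+h)-V(z^*(t,x))\bigr)\,dt + g(z^*(T,x)+h)-g(z^*(T,x)).
\end{equation*}
Dividing by $h>0$ and letting $h\to 0^+$ bounds $\phi_\omega'(x)$ above by $\int_0^T V'(z^*(t,x))\,dt+g'(z^*(T,x))$; dividing by $h<0$ flips the inequality and bounds $\phi_\omega'(x)$ below by the same quantity. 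Integrating $\ddot a=V'(z^*)$ over $[0,T]$ and substituting $\dot a(T)=-g'(z^*(T,x))$ identifies this common value as $-\dot a(0)$.

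Finally, substituting $z^*(t,x)=(a(t)-\int_0^t\omega(s)\,ds)/c_0$ into the ODE gives the second-order Cauchy problem $\ddot a(t)=V'\bigl((a(t)-\int_0^t\omega(s)\,ds)/c_0\bigr)$ with initial data $a(0)=c_0 x$ and $\dot a(0)=-\phi_\omega'(x)$. The right-hand side is Lipschitz in $a$ uniformly in $t$ by~\eqref{eq:V_bounds}, and the initial data is common to every minimizer by the previous step, so Cauchy--Lipschitz forces $a$, hence $z^*(\cdot,x)$, to be unique. I expect the main subtlety to be the bootstrap from the weak Euler--Lagrange identity to $C^2$ regularity, which crucially uses that $V'\circ z^*$ is continuous rather than merely measurable; once $a\in C^2$ is in hand, the remaining steps are elementary calculus and ODE uniqueness.
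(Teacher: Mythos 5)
Your proposal is correct and follows essentially the same route as the paper: the weak Euler--Lagrange identity for $a$, the continuity bootstrap to $C^2$, the transversality condition by substituting back, the shifted-trajectory comparison combined with $\int_0^T\ddot a\,dt=\dot a(T)-\dot a(0)$ to identify $\phi_\omega'(x)=-\dot a(0)$, and Picard--Lindel\"{o}f for the resulting Cauchy problem. The only cosmetic differences are testing with $H^1$ rather than $C^\infty$ variations and phrasing the derivative formula via one-sided difference quotients instead of the paper's explicit quadratic bound $\phi_\omega(x')\leq\phi_\omega(x)-\dot a(0)(x'-x)+C(x'-x)^2$; you also correctly include the factor $1/c_0$ inside $V'$ when rewriting the ODE as a Cauchy problem, which the paper omits.
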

\begin{proof}
    Let $\eta \in C^\infty([0,T])$ such that $\eta(0)=0$. Then we have that
    \begin{equation*}
        \frac{d}{d\epsilon}\bigg|_{\epsilon=0} \int_0^T \left[c_0 \frac{\dot{z}^*(t,x)^2}{2}+\dot{z}^*(t,x)\omega(t) +V(z^*(t,x)+\epsilon \eta(t))\right] dt+g(z^*(T,x)+\epsilon \eta(T))=0,
    \end{equation*}
    which yields
    \begin{equation*}
        \int_0^T \left[(c_0 \dot{z}^*(t,x)+\omega(t))\dot{\eta}(t) +V'(z^*(t,x))\eta(t)\right] dt+g'(z^*(T,x))\eta(T)=0.
    \end{equation*}
    Rewriting this previous equation in terms of $a$, we obtain
    \begin{equation}\label{eq:foc}
        \int_0^T \left[\dot{a}(t)\dot{\eta}(t) +V'(z^*(t,x))\eta(t)\right] dt+g'(z^*(T,x))\eta(T)=0,
    \end{equation}
    where we a priori have that $\dot{a}\in L^2(0,T)$. Choosing $\eta$ such that $\eta(T)=0$, we arrive at
    \begin{equation*}
        \int_0^T \left[\dot{a}(t)\dot{\eta}(t) +V'(z^*(t,x))\eta(t)\right] dt=0, 
    \end{equation*}
    for all $\eta\in C^\infty([0,T])$ such that $\eta(0)=\eta(T)=0$, which means the weak derivative of $\dot{a}(t)$ is $V'(z^*(t,x))$. But the latter is continuous, so $\dot{a}$ is continuously differentiable, and 
    \begin{equation*}
        \ddot{a}(t)=V'(z^*(t,x)),~t\in (0,T).
    \end{equation*}
    Plugging this back into~\eqref{eq:foc} we find that
    \begin{equation}
    \begin{split}
        0=&\int_0^T \left[\dot{a}(t)\dot{\eta}(t) +V'(z^*(t,x))\eta(t)\right] dt+g'(z^*(T,x))\eta(T)\\
        =&\int_0^T \left[\dot{a}(t)\dot{\eta}(t) +\ddot{a}(t)\eta(t)\right] dt+g'(z^*(T,x))\eta(T)=(\dot{a}(T)+g'(z^*(T,x))) \eta(T)
    \end{split}
    \end{equation}
    for all $\eta\in C^\infty([0,T])$ such that $\eta(0)$. Choosing $\eta$ such that $\eta(T)=1$ we obtain
    \begin{equation*}
        \dot{a}(T)=-g'(z^*(T,x)).
    \end{equation*}
    The equality $a(0)=c_0 x$ is evident.

    Now assume that $\phi_\omega$ is differentiable at $x$, and $x'$ is an arbitrary point. Then we have that $\tilde{z}(t)=z^*(t,x)+x'-x$ is in general suboptimal for~\eqref{eq:phi_omega_1} for the initial state $x'$, and so
    \begin{equation*}
        \begin{split}
            \phi_\omega(x')\leq&  \int_0^T \left[c_0 \frac{\dot{z}^*(t,x)^2}{2}+\dot{z}^*(t,x)\omega(t) +V(z^*(t,x)+x'-x)\right] dt+g(z^*(T,x)+x'-x)\\
            = & \int_0^T \left[c_0 \frac{\dot{z}^*(t,x)^2}{2}+\dot{z}^*(t,x)\omega(t) +V(z^*(t,x))\right] dt+g(z^*(T,x))\\
            &+\int_0^T \left[V(z^*(t,x)+x'-x)-V(z^*(t,x))\right] dt+g(z^*(T,x)+x'-x)-g(z^*(T,x))\\
            \leq & \phi_\omega(x)+\int_0^T V'(z^*(t,x)) (x'-x) dt+g'(z^*(T,x))(x'-x)+C(x'-x)^2\\
            =&\phi_\omega(x)+\left[\int_0^T \ddot{a}(t)dt+g'(z^*(T,x)) \right] (x'-x)+C (x'-x)^2\\
            =&\phi_\omega(x)-\dot{a}(0) (x'-x)+C(x'-x)^2,
        \end{split}
    \end{equation*}
    where we used the fact that $V,g$ have globally bounded second derivatives. Consequently, we have that
    \begin{equation*}
        \begin{split}
            \frac{\phi_\omega(x')-\phi_\omega(x)}{x'-x} \leq -\dot{a}(0)+C(x'-x),\quad x'>x,\\
            \frac{\phi_\omega(x')-\phi_\omega(x)}{x'-x} \geq -\dot{a}(0)+C(x'-x),\quad x'<x,\\
        \end{split}
    \end{equation*}
    and passing to the limit as $x'\to x$, we obtain
    \begin{equation*}
        \phi'_\omega(x)=-\dot{a}(0).
    \end{equation*}
    To prove the uniqueness of $z^*(\cdot,x)$, note that \eqref{eq:a_ode} can be written as
    \begin{equation*}
        \begin{cases}
            \ddot{a}(t)=V'(a(t)-\Omega(t)),~t\in (0,T),\\
            a(0)=c_0 x,~\dot{a}(T)=-g'(a(T)-\Omega(T)),
        \end{cases}
    \end{equation*}
    where $\Omega(t)=\int_0^t\omega(s)ds$. Since the differentiability of $\phi_\omega$ at $x$ yields the derivative of $a$ at $t=0$, we have that $a$ must solve the following initial value problem
    \begin{equation}\label{eq:a_ivp}
        \begin{cases}
            \ddot{a}(t)=V'(a(t)-\Omega(t)),~t\in (0,T),\\
            a(0)=c_0 x,~\dot{a}(0)=-\phi'_\omega(x).
        \end{cases}
    \end{equation}
    Since $\omega \in L^2(0,T)$, we have that $\Omega$ is H\"{o}lder-$\frac{1}{2}$ continuous by~\eqref{eq:elementary}. Additionally, $V'$ is Lipschitz continuous due to the global bounds on $V''$. Thus, the initial value problem~\eqref{eq:a_ivp} admits a unique solution by the Picard-Lindel\"{o}f Theorem. The uniqueness of $a$ implies the uniquness of $z^*(\cdot,x)$.
\end{proof}

Our next focus is on the qualitative properties of $\phi_\omega$.
\begin{proposition}\label{prp:phi_omega}
    Let $\omega \in L^2(0,T)$ be arbitrary. Then $\phi_\omega$ is a globally Lipschitz and semiconcave function; that is,
    \begin{equation}
        \begin{split}
            |\phi_\omega(x+h)-\phi_\omega(x)|\leq& C|h|,\\
            \phi_\omega(x+h)+\phi_\omega(x-h)-2\phi_\omega(x) \leq& C h^2,\quad \forall x,h \in \mathbb{R}.
        \end{split}
    \end{equation}
    In particular, $\phi_\omega$ is Lebesgue a.e. differentiable in $\mathbb{R}$, and~\eqref{eq:phi_omega_1} admits a unique minimizer for Lebesgue a.e. $x\in \mathbb{R}$.
\end{proposition}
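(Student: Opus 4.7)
The plan is to prove both inequalities by the standard \emph{comparison-with-shifted-trajectories} trick using the minimizer $z^*(\cdot,x)$ from Lemma~\ref{lma:existence_mins} as a building block. The key point is that shifting a trajectory by a constant does not change its kinetic term $c_0 \dot{z}^2/2$ nor the coupling term $\dot{z}\omega$; only the $V$-integral and the terminal cost $g$ are affected, and both are controlled by the global bounds~\eqref{eq:V_bounds}--\eqref{eq:g_bounds}.

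For the Lipschitz bound, I would fix $x,h \in \mathbb{R}$ and use the candidate trajectory $\tilde{z}(t) := z^*(t,x) + h$ for the optimization problem started at $x+h$. Since $\tilde{z}(0)=x+h$ and $\dot{\tilde{z}} = \dot{z}^*(\cdot,x)$, the cost of $\tilde{z}$ equals the cost of $z^*(\cdot,x)$ plus the differences $\int_0^T[V(z^*(t,x)+h) - V(z^*(t,x))]\,dt + g(z^*(T,x)+h) - g(z^*(T,x))$. The mean-value theorem and~\eqref{eq:V_bounds},~\eqref{eq:g_bounds} bound this difference by $C|h|$, giving $\phi_\omega(x+h) - \phi_\omega(x) \le C|h|$. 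Swapping the roles of $x$ and $x+h$ yields the two-sided Lipschitz estimate.

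For semiconcavity, I would use the same comparison but now with both shifts $\tilde{z}_\pm(t) := z^*(t,x) \pm h$ tested against the problems started at $x \pm h$. Adding the two inequalities, the kinetic and coupling terms cancel against $2\phi_\omega(x)$, and one is left with
\begin{equation*}
\phi_\omega(x+h) + \phi_\omega(x-h) - 2\phi_\omega(x) \le \int_0^T \bigl[V(z^*(t,x)+h) + V(z^*(t,x)-h) - 2V(z^*(t,x))\bigr]\,dt + \bigl[g(z^*(T,x)+h) + g(z^*(T,x)-h) - 2g(z^*(T,x))\bigr].
\end{equation*}
Applying Taylor's theorem to the function $h \mapsto F(y+h)+F(y-h)-2F(y)$ (which vanishes to first order at $h=0$) and using the uniform bounds on $V''$ and $g''$, each bracket is bounded by $C h^2$, giving the semiconcavity estimate.

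The last two assertions are immediate consequences. A Lipschitz function on $\mathbb{R}$ is differentiable Lebesgue almost everywhere (Rademacher, or directly from monotonicity of Lipschitz functions in one dimension). Combining this with Proposition~\ref{prp:min_uniqueness}, which gives uniqueness of the minimizer at every point of differentiability of $\phi_\omega$, yields a.e.\ uniqueness of minimizers of~\eqref{eq:phi_omega_1}. No serious obstacle is expected; the only subtlety is checking that the kinetic and coupling terms really do drop out of the comparison, which they do because a constant shift of the trajectory leaves both $\dot{z}$ and the integral $\int_0^T \dot{z}\,\omega\,dt$ unchanged.
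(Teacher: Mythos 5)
Your proposal is correct and follows essentially the same route as the paper: comparison with the constant-shifted trajectories $z^*(\cdot,x)\pm h$, noting that the kinetic and coupling terms are unaffected by the shift, then invoking the global bounds on $V',g'$ for the Lipschitz estimate and on $V'',g''$ for semiconcavity, and finally Rademacher's theorem together with Proposition~\ref{prp:min_uniqueness} for the a.e.\ differentiability and uniqueness claims. No gaps.
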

\begin{proof}
    Let $z^*(\cdot,x)$ be a minimizer for~\eqref{eq:phi_omega_1}. Then we have that $z_1(t)=z^*(t,x)+h$ and $z_2(t)=z^*(t,x)-h$ are in general suboptimal trajectories for initial points $x+h$ and $x-h$, respectively. Hence,
    \begin{equation}\label{eq:h_ineq}
        \begin{split}
            \phi_\omega(x+h) \leq &\int_0^T \left[c_0 \frac{\dot{z}^*(t,x)^2}{2}+\dot{z}^*(t,x)\omega(t) +V(z^*(t,x)+h)\right] dt+g(z^*(T,x)+h)\\
            = & \phi_\omega(x)+\int_0^T \left[V(z^*(t,x)+h)-V(z^*(t,x))\right] dt+g(z^*(T,x)+h)-g(z^*(T,x))\\
            \leq & \phi_\omega(x)+C |h|,
        \end{split}
    \end{equation}
    where we used global Lipschitz bounds of $V,g$. Exchanging the roles of $x+h$ and $x$ above, we obtain
    \begin{equation*}
        \phi(x)\leq \phi_\omega(x)+C|h|,
    \end{equation*}
    which completes the Lipschitz bounds for $\phi_\omega$.

    For semiconcavity estimates, we use~\eqref{eq:h_ineq} with $-h$ and obtain
    \begin{equation*}
        \phi_\omega(x-h)\leq \phi_\omega(x)+\int_0^T \left[V(z^*(t,x)-h)-V(z^*(t,x))\right] dt+g(z^*(T,x)-h)-g(z^*(T,x)).
    \end{equation*}
    The combination of this inequality with~\eqref{eq:h_ineq} yields
    \begin{equation*}
        \begin{split}
            \phi_\omega(x+h)+\phi_\omega(x-h)-2\phi_\omega(x)\leq& \int_0^T \left[V(z^*(t,x)+h)+V(z^*(t,x)-h)-2V(z^*(t,x))\right] dt\\
            &+g(z^*(T,x)-h)+g(z^*(T,x)-h)-2g(z^*(T,x))\\
            \leq & C|h|^2,
        \end{split}
    \end{equation*}
    where we used global bounds of $V'',g''$.

    The a.e. differentiability of $\phi_\omega$ and uniqueness of minimizers for~\eqref{eq:phi_omega_1} follows from the Rademacher's theorem and Proposition~\ref{prp:min_uniqueness}, respectively.
\end{proof}

Our final preliminary result on the single-agent optimization problem is the stability of the minimizers of~\eqref{eq:phi_omega_1}.

\begin{proposition}\label{prp:min_stability}
    Let $\omega_n,\omega \in L^2(0,T)$ and $x_n,x\in \mathbb{R}$ be such that
    \begin{itemize}
        \item $\{\omega_n\}$ converges strongly to $\omega$ in $L^2(0,T)$,
        \item $\{x_n\}$ converges to $x$.
    \end{itemize}
    Furthermore, assume that $z_n^* \in H^1(0,T)$ are minimizers of~\eqref{eq:phi_omega_1} for the price $\omega_n$ and initial state $x_n$. Then there exists a $z^*\in H^1(0,T)$, which is a minimizer for~\eqref{eq:phi_omega_1} for the price $\omega$ and initial state $x$, such that up to a subsequence
\begin{itemize}
    \item $\{z_n^*\}$ converges strongly to $z^*$ in $H^1(0,T)$,
    \item $\{z_n^*\}$ converges to $z^*$ in $C([0,T])$.
\end{itemize}
\end{proposition}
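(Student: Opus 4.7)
The plan is to execute a standard direct-method argument: first extract compactness, then pass to the limit, then upgrade weak to strong convergence.

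First, I would invoke Lemma~\ref{lma:existence_mins} applied to the pair $(\omega_n, x_n)$. Since $\omega_n \to \omega$ in $L^2(0,T)$, the sequence $\|\omega_n\|_2$ is bounded, and so the lemma yields a uniform bound $\|\dot{z}_n^*\|_2 \leq C$, together with uniform Hölder-$\tfrac{1}{2}$ estimates and uniform sup-norm bounds on $z_n^*$ (using $z_n^*(0)=x_n \to x$). Combining weak compactness in $H^1(0,T)$ with Arzelà--Ascoli in $C([0,T])$ (and a diagonal extraction), I pass to a subsequence along which $z_n^* \rightharpoonup z^*$ weakly in $H^1(0,T)$ and $z_n^* \to z^*$ uniformly on $[0,T]$. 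In particular, $z^*(0)=\lim_n z_n^*(0)=\lim_n x_n=x$, so $z^*$ is admissible for the limit problem~\eqref{eq:phi_omega_1} with data $(\omega,x)$.

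Next, I would show that $z^*$ is a minimizer for $(\omega,x)$. The kinetic term $\tfrac{c_0}{2}\int \dot{z}^2\,dt$ is weakly lower semicontinuous in $H^1$; the mixed term $\int \dot{z}_n^* \omega_n\,dt$ converges to $\int \dot{z}^* \omega\,dt$ by weak-strong pairing in $L^2$; the running-cost term $\int V(z_n^*)\,dt$ and the endpoint term $g(z_n^*(T))$ converge by uniform convergence and continuity of $V$ and $g$. Thus the functional is lower semicontinuous along the sequence:
\begin{equation*}
\mathcal{J}_\omega(z^*,x) \leq \liminf_{n\to\infty} \mathcal{J}_{\omega_n}(z_n^*,x_n) = \liminf_{n\to\infty} \phi_{\omega_n}(x_n).
\end{equation*}
To compare with any competitor $\tilde{z}\in H^1(0,T)$ with $\tilde{z}(0)=x$, I translate it to an admissible trajectory for $(\omega_n,x_n)$ by setting $\tilde{z}_n(t)=\tilde{z}(t)+(x_n-x)$, which converges to $\tilde{z}$ strongly in $H^1$. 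Then $\mathcal{J}_{\omega_n}(z_n^*,x_n)\leq \mathcal{J}_{\omega_n}(\tilde{z}_n,x_n)$, and the right-hand side converges to $\mathcal{J}_\omega(\tilde{z},x)$ by continuity of the functional under strong convergence of $(\tilde{z}_n,\omega_n)$. Combining these gives $\mathcal{J}_\omega(z^*,x)\leq \mathcal{J}_\omega(\tilde{z},x)$, so $z^*$ is a minimizer. A byproduct of these two inequalities, applied with $\tilde{z}=z^*$, is that $\phi_{\omega_n}(x_n) \to \phi_\omega(x)$.

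Finally, I would upgrade to strong convergence. In the energy identity
\begin{equation*}
\phi_{\omega_n}(x_n) = \frac{c_0}{2}\|\dot{z}_n^*\|_2^2 + \int_0^T \dot{z}_n^*(t)\omega_n(t)\,dt + \int_0^T V(z_n^*(t))\,dt + g(z_n^*(T)),
\end{equation*}
every term other than $\tfrac{c_0}{2}\|\dot{z}_n^*\|_2^2$ has a known limit (as in the lsc step), and the left-hand side converges to $\phi_\omega(x)$. Subtracting, $\|\dot{z}_n^*\|_2 \to \|\dot{z}^*\|_2$; together with weak convergence $\dot{z}_n^* \rightharpoonup \dot{z}^*$ in the Hilbert space $L^2(0,T)$, this yields strong convergence of $\dot{z}_n^*$ in $L^2$, hence of $z_n^*$ in $H^1$. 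The $C([0,T])$ convergence is already in hand from Arzelà--Ascoli. The main (mild) obstacle is the passage-to-the-limit in the mixed term $\int \dot{z}_n^*\omega_n\,dt$; this is where strong convergence of $\omega_n$ in $L^2$ is essential, since weak-weak pairing would fail. Everything else reduces to bookkeeping with the bounds from Lemma~\ref{lma:existence_mins}.
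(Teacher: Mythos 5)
Your proof is correct. The compactness extraction and the identification of the limit $z^*$ as a minimizer (translating competitors by $x_n-x$, weak lower semicontinuity of the kinetic term, weak--strong pairing for $\int \dot z_n^*\omega_n\,dt$, uniform convergence for the $V$ and $g$ terms) coincide with the paper's argument essentially line by line. Where you genuinely diverge is the upgrade from weak to strong $H^1$ convergence. You observe that your two inequalities force convergence of the optimal values, $\phi_{\omega_n}(x_n)\to\phi_\omega(x)$, then subtract the convergent terms in the energy identity to get $\|\dot z_n^*\|_2\to\|\dot z^*\|_2$, and conclude by the Radon--Riesz property of the Hilbert space $L^2(0,T)$. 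The paper instead invokes the Euler--Lagrange characterization from Proposition~\ref{prp:min_uniqueness}: writing $a_n(t)=z_n^*(t)+\int_0^t\omega_n$, it uses $\ddot a_n=V'(z_n^*)$ together with the terminal condition $\dot a_n(T)=-g'(z_n^*(T))$ and the uniform convergence of $z_n^*$ to deduce uniform convergence of $\dot a_n$, hence strong $L^2$ convergence of $\dot z_n^*=\dot a_n-\omega_n$. Your route is more elementary and self-contained --- it needs only the variational structure and the strict (quadratic) convexity of the Lagrangian in $\dot z$, not the $C^2$ regularity of minimizers --- and it yields the convergence of the optimal values as a useful byproduct. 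The paper's route exploits the specific ODE structure and delivers slightly more, namely uniform (not just $L^2$) convergence of $\dot a_n$. Both are complete; the only point to be careful about in yours, which you correctly flag, is that the mixed term requires the \emph{strong} convergence of $\omega_n$ paired against the merely weakly convergent $\dot z_n^*$, and that the bound $\sup_n\|\dot z_n^*\|_2<\infty$ from Lemma~\ref{lma:existence_mins} is what makes that pairing pass to the limit.
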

\begin{proof}
    From Lemma~\ref{lma:existence_mins} we have that
    \begin{equation*}
        \begin{split}
        \|\dot{z}_n^*\|_{2}\leq & C(\|\omega_n\|_2),\\
        |z_n^*(t')-z_n^*(t)|\leq & C(\|\omega_n\|_2)~\sqrt{|t'-t|},\quad \forall t,t'\in [0,T],\\
        |z_n^*(t)|\leq& |x_n|+ C(\|\omega_n\|_2),\quad \forall t\in [0,T].
    \end{split}
    \end{equation*}
    Furthermore, the convergence of $\{\omega_n\}$ and $\{x_n\}$ implies their boundedness, so $\{z_n\}$ is
    \begin{itemize}
        \item a bounded sequence in $H^1(0,T)$,
        \item a uniformly bounded and equicontinuous sequence in $C([0,T])$.
    \end{itemize}
    Therefore, there exists $z^*\in H^1(0,T)$ such that up to a subsequence
    \begin{itemize}
    \item $\{z_n^*\}$ converges weakly to $z^*$ in $H^1(0,T)$,
    \item $\{z_n^*\}$ converges to $z^*$ in $C([0,T])$.
    \end{itemize}
    We claim that $z^*$ is a minimizer for~\eqref{eq:phi_omega_1}. Firstly, we have that
    \begin{equation*}
        z^*(0)=\lim_{n \to \infty} z^*_n(0)=\lim_{n \to \infty} x_n=x.
    \end{equation*}
    Next, let $z\in H^1(0,T)$ be arbitrary such that $z(t)=x$. Then we have that
    \begin{equation*}
        \begin{split}
            &\int_0^T \left[c_0 \frac{\dot{z}^*_n(t)^2}{2}+\dot{z}^*_n(t)\omega_n(t) +V(z^*_n(t))\right] dt+g(z^*_n(T))\\
            \leq & \int_0^T \left[c_0 \frac{\dot{z}(t)^2}{2}+\dot{z}(t)\omega_n(t) +V(z(t)+x_n-x)\right] dt+g(z(T)+x_n-x)
        \end{split}
    \end{equation*}
    Passing to the limit as $n \to \infty$ and taking into account that
    \begin{equation*}
        \liminf_{n \to \infty} \int_0^T  \dot{z}^*_n(t)^2 dt \geq \int_0^T  \dot{z}^*(t)^2 dt\quad \text{and} \quad \lim_{n \to \infty} \int_0^T \dot{z}^*_n(t)\omega_n(t) dt = \int_0^T \dot{z}^*(t)\omega(t) dt,
    \end{equation*}
    we find that
    \begin{equation*}
        \begin{split}
            &\int_0^T \left[c_0 \frac{\dot{z}^*(t)^2}{2}+\dot{z}^*(t)\omega(t) +V(z^*(t))\right] dt+g(z^*(T))\\
            \leq & \int_0^T \left[c_0 \frac{\dot{z}(t)^2}{2}+\dot{z}(t)\omega(t) +V(z(t))\right] dt+g(z(T)),
        \end{split}
    \end{equation*}
    which means that $z^*$ is a minimizer for~\eqref{eq:phi_omega_1}.

    Next, we denote by
    \begin{equation*}
    \begin{split}
        a_n(t)=z_n^*(t)+\int_0^t \omega_n(s) ds \quad \text{and} \quad a(t)=z^*(t)+\int_0^t \omega(s) ds.
    \end{split}
    \end{equation*}
    From Proposition~\ref{prp:min_uniqueness} we have that $a_n,a \in C^2([0,T])$, and
    \begin{equation*}
        \ddot{a}_n(t)=V'(z_n^*(t))\quad \text{and}\quad \ddot{a}(t)=V'(z^*(t)),\quad t\in [0,T],
    \end{equation*}
    which means that $\{\ddot{a}_n\}$ converges uniformly to $\ddot{a}$ in $C([0,T])$. The latter yields that $\{\dot{a}_n\}$ converges uniformly to $\dot{a}$, which in particular means that $\{\dot{a}_n\}$ converges strongly to $\dot{a}$ in $L^2(0,T)$. Since
    \begin{equation*}
        \dot{z}^*_n=\dot{a}_n-\omega_n \quad \text{and}\quad \dot{z}^*=\dot{a}-\omega,
    \end{equation*}
    we conclude that $\{\dot{z}_n^*\}$ converges strongly to $\dot{z}^*$ in $L^2(0,T)$, and so $\{z_n^*\}$ converges strongly to $z^*$ in $H^1(0,T)$.
\end{proof}

\begin{remark}\label{rmk:measurable_selection}
    Fix an $\omega\in L^2(0,T)$. Proposition~\ref{prp:phi_omega} implies that~\eqref{eq:phi_omega_1} admits a unique minimizer $z^*(\cdot,x)$ for Lebesgue a.e. $x\in \mathbb{R}$. Furthermore, Proposition~\ref{prp:min_stability} implies that the minimizers of~\eqref{eq:phi_omega_1} are stable under perturbations of the initial state $x\in \mathbb{R}$. Hence, for points $x\in \mathbb{R}$ such that~\eqref{eq:phi_omega_1} admits multiple minimizers, we can choose $z^*(\cdot,x)$ in such a way that the map $(t,x)\mapsto z^*(t,x)$ is Borel measurable~\cite{aubin90set}. We denote one such selection by $z^*_\omega(t,x)$.
\end{remark}

\section{Analysis of the variational problem for the price}\label{sec:I}

\begin{proposition}\label{prp:J}
    Nonlinear functional $J$ defined in~\eqref{eq:J} is everywhere finite concave functional in $L^2(0,T)$ such that
    \begin{equation}\label{eq:J_upper_bound}
        J[\omega]\leq C-C\|\omega\|_2^2,
    \end{equation}
    where $C>0$ is a constant that depends only on problem data.
    
    Furthermore, $J$ is everywhere Fr\'{e}chet differentiable with
    \begin{equation*}
        \delta J[\omega](t)=\int_{\mathbb{R}} \dot{z}^*_{\omega}(t,x)\rho_0(x)dx, \quad t\in (0,T),
    \end{equation*}
    where $z^*_\omega$ is a Borel measurable selection of minimizing trajectories (Remark~\ref{rmk:measurable_selection}).
\end{proposition}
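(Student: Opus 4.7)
The plan is to establish the four claims of Proposition~\ref{prp:J} in order: everywhere finiteness on $L^2(0,T)$, the quadratic upper bound, concavity, and the Fr\'{e}chet derivative formula. Finiteness from below follows immediately from the lower bound in~\eqref{eq:phi_omega_bounds} combined with compact support of $\rho_0$ and continuity of $V,g$, so only the upper bound $J[\omega]\leq C-C\|\omega\|_2^2$ requires a sharper estimate than~\eqref{eq:phi_omega_bounds}. For this I would test the trajectory $z(t) = x - \Omega(t)/c_0$ with $\Omega(t)=\int_0^t\omega(s)\,ds$, corresponding to the control $\alpha(t) = -\omega(t)/c_0$. A direct computation gives $\int_0^T[c_0\alpha^2/2+\alpha\omega]\,dt = -\|\omega\|_2^2/(2c_0)$, while~\eqref{eq:V_bounds},~\eqref{eq:g_bounds}, and~\eqref{eq:elementary} yield $|V(z(t))-V(x)|\leq C\sqrt{t}\|\omega\|_2/c_0$ and analogously for $g$; summing and completing the square produces $\phi_\omega(x)\leq TV(x)+g(x) - \|\omega\|_2^2/(4c_0)+C$, and integration against the compactly supported $\rho_0$ delivers the claimed bound.

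Concavity is a standard envelope argument: for every admissible trajectory $z$ with $z(0)=x$, the map $\omega\mapsto \int_0^T[c_0\dot{z}^2/2+\dot{z}\omega+V(z)]\,dt+g(z(T))$ is affine in $\omega$, so $\phi_\omega(x)$ is a pointwise infimum of affine functionals of $\omega$ and hence concave; integrating against the probability measure $\rho_0$ preserves concavity.

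For the Fr\'{e}chet derivative, I would use the same envelope idea at the level of trajectories. Using $z^*_\omega(\cdot,x)$ as a suboptimal candidate for the price $\omega'$ yields
\begin{equation*}
\phi_{\omega'}(x)-\phi_\omega(x)\leq \int_0^T\dot{z}^*_\omega(t,x)\bigl(\omega'(t)-\omega(t)\bigr)\,dt,
\end{equation*}
and swapping the roles of $\omega,\omega'$ produces the symmetric lower bound involving $\dot{z}^*_{\omega'}$. Setting $\omega'=\omega+\eta$, integrating against $\rho_0$ and applying Fubini, I obtain the sandwich
\begin{equation*}
\int_0^T\eta(t)\,m_{\omega+\eta}(t)\,dt\leq J[\omega+\eta]-J[\omega]\leq \int_0^T\eta(t)\,m_\omega(t)\,dt,
\end{equation*}
where $m_\omega(t):=\int_{\mathbb{R}}\dot{z}^*_\omega(t,x)\rho_0(x)\,dx$; the uniform bound on $\|\dot{z}^*_\omega(\cdot,x)\|_2$ from Lemma~\ref{lma:existence_mins} together with Minkowski's integral inequality show $m_\omega\in L^2(0,T)$, so both sides are well-defined linear functionals of $\eta$. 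Cauchy-Schwarz bounds their difference by $\|\eta\|_2\cdot\|m_{\omega+\eta}-m_\omega\|_{L^2(0,T)}$.

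The main obstacle is upgrading the Gateaux derivative implied by this sandwich to a Fr\'{e}chet derivative, which reduces to showing $\|m_{\omega+\eta}-m_\omega\|_{L^2(0,T)}\to 0$ as $\|\eta\|_2\to 0$. Minkowski's integral inequality dominates this quantity by $\int_{\mathbb{R}}\|\dot{z}^*_{\omega+\eta}(\cdot,x)-\dot{z}^*_\omega(\cdot,x)\|_{L^2(0,T)}\rho_0(x)\,dx$. Proposition~\ref{prp:phi_omega} ensures the minimizer of~\eqref{eq:phi_omega_1} is unique for $\rho_0$-a.e.\ $x$, so Proposition~\ref{prp:min_stability} combined with the standard subsequence trick yields $\dot{z}^*_{\omega+\eta_n}(\cdot,x)\to\dot{z}^*_\omega(\cdot,x)$ strongly in $L^2(0,T)$ for a.e.\ $x$ along any sequence $\eta_n\to 0$. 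Lemma~\ref{lma:existence_mins} provides the uniform integrand bound $\|\dot{z}^*_{\omega+\eta_n}(\cdot,x)\|_2\leq C(\|\omega\|_2+1)$ once $\|\eta_n\|_2\leq 1$, and dominated convergence against the probability measure $\rho_0$ then delivers the required $L^2$ convergence, completing the Fr\'{e}chet claim with the stated derivative formula.
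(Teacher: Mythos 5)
Your proposal is correct and follows essentially the same route as the paper: the same test trajectory $z(t)=x-\tfrac{1}{c_0}\int_0^t\omega(s)\,ds$ for the upper bound, the same suboptimality sandwich giving $p_\omega$ as a supergradient, and the same upgrade to Fr\'{e}chet differentiability via a.e.\ uniqueness of minimizers (Proposition~\ref{prp:phi_omega}), stability (Proposition~\ref{prp:min_stability}), the uniform bounds of Lemma~\ref{lma:existence_mins}, and dominated convergence. The only cosmetic differences are that you phrase concavity as a pointwise infimum of affine functionals rather than reading it off the supergradient inequality, and you use Minkowski's integral inequality where the paper uses Jensen's; neither changes the substance.
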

\begin{proof}
    Let $\omega \in L^2(0,T)$ be arbitrary. By Proposition~\ref{prp:phi_omega} we have that $\phi_\omega$ is a Lipschitz continuous function. Since $\rho_0$ is compactly supported, we obtain that $\phi_\omega$ is bounded on the support of $\rho_0$, and so $J[\omega]$ is finite.

    Let $z^*_\omega$ be a Borel measurable selection of minimizing trajectories. We denote by
    \begin{equation}\label{eq:p}
        p_\omega(t)=\int_{\mathbb{R}} \dot{z}^*_{\omega}(t,x)\rho_0(x)dx,\quad t\in (0,T).
    \end{equation}
    Note that $p_\omega$ is well defined and does not depend on a particular selection $z^*_\omega$. Indeed, there is a unique choice for $z^*_\omega(\cdot,x)$ for $\rho_0$ a.e. $x$ since the minimizers of~\eqref{eq:phi_omega_1} are unique for Lebesgue a.e. $x$ (Proposition~\ref{prp:phi_omega}), and $\rho_0$ is absolutely continuous with respect to the Lebesgue measure.

    Furthermore, bounds~\eqref{eq:min_bounds} yield
    \begin{equation}\label{eq:p_omega_bound}
        \begin{split}
            \int_0^T p^2_\omega(t) dt =&\int_0^T \left( \int_{\mathbb{R}} \dot{z}^*_{\omega}(t,x)\rho_0(x)dx\right)^2 dt\leq \int_0^T \int_{\mathbb{R}} \dot{z}^*_{\omega}(t,x)^2 \rho_0(x)dx dt\\
            = &  \int_{\mathbb{R}} \rho_0(x)dx \int_0^T  \dot{z}^*_{\omega}(t,x)^2  dt \leq C(\|\omega\|_2),
        \end{split}
    \end{equation}
    so $p_\omega \in L^2(0,T)$. Let $\hat{\omega}\in L^2(0,T)$ be arbitrary. Then we have that $z^*_\omega$ are in general suboptimal for the price $\hat{\omega}$, so
    \begin{equation*}
        \begin{split}
            \phi_{\hat{\omega}}(x)\leq& \int_0^T \left[c_0 \frac{\dot{z}^*_{\omega}(t,x)^2}{2}+\dot{z}^*_{\omega}(t,x)\hat{\omega}(t) +V(z^*_{\omega}(t,x))\right] dt+g(z^*_{\omega}(T,x))\\
            =&\int_0^T \left[c_0 \frac{\dot{z}^*_{\omega}(t,x)^2}{2}+\dot{z}^*_{\omega}(t,x)\omega(t) +V(z^*_{\omega}(t,x))\right] dt+g(z^*_{\omega}(T,x))\\
            &+\int_0^T \dot{z}^*_{\omega}(t,x)\left(\hat{\omega}(t)-\omega(t)\right) dt\\
            =&\phi_\omega(x)+\int_0^T \dot{z}^*_{\omega}(t,x)\left(\hat{\omega}(t)-\omega(t)\right) dt.
        \end{split}
    \end{equation*}
    Integrating this previous inequality with respect to $\rho_0$, we obtain
    \begin{equation}\label{eq:J_superdiff}
            J[\hat{\omega}]\leq J[w]+\langle p_\omega, \hat{\omega}-\omega \rangle,
    \end{equation}
    which implies that $J$ is concave.

    Furthermore, for every $z\in H^1(0,T)$ such that $z(0)=x$ we have that
    \begin{equation*}
        \begin{split}
            &\int_0^T V(z(t)) dt+g(z(T)) \leq T\cdot V(x)+g(x)+C \int_0^T |z(t)-x|dt+C|z(T)-x|\\
            \leq & T\cdot V(x)+g(x)+ C \int_0^T |\dot{z}(t)| dt \leq T\cdot V(x)+g(x)+C \|\dot{z}\|_2.
        \end{split}
    \end{equation*}
    Hence, taking $z(t)=x-\frac{1}{c_0}\int_0^t \omega(s)ds$, we obtain
    \begin{equation*}
        \begin{split}
            \phi_\omega(x)\leq & \int_0^T \left[c_0 \frac{\dot{z}(t)^2}{2}+\dot{z}(t)\omega(t) +V(z(t))\right] dt+g(z(T))\\
            \leq & -\frac{\|\omega\|_2^2}{2c_0}+ C\|\omega\|_2+ T\cdot V(x)+g(x)\leq -C \|\omega\|_2^2+C+T\cdot V(x)+g(x).
        \end{split}
    \end{equation*}
    Integrating this previous inequality with respect to $\rho_0$, we obtain 
    \begin{equation*}
        J[\omega]=\int_{\mathbb{R}} \phi_\omega(x) \rho_0(x)dx \leq -C \|\omega\|_2^2+C + \int_{\mathbb{R}}\left( T\cdot V(x)+g(x)\right)\rho_0(x)dx= -C \|\omega\|_2^2+C.
    \end{equation*}
    To complete the proof, we need to show that $p_\omega$ is the Fr\'{e}chet derivative of $J[\omega]$. Using~\eqref{eq:J_superdiff}, we have that
    \begin{equation*}
        0\geq J[\hat{\omega}]-J[\omega]-\langle p_\omega, \hat{\omega}-\omega \rangle \geq \langle p_{\hat{\omega}}-p_\omega, \hat{\omega}-\omega \rangle \geq -\|p_{\hat{\omega}}-p_\omega\|_2~\|\hat{\omega}-\omega\|_2,
    \end{equation*}
    for all $\hat{\omega},\omega \in L^2(0,T)$. Thus, we are done if we prove
    \begin{equation*}
        \lim_{\hat{\omega}\to \omega} \|p_{\hat{\omega}}-p_\omega\|_2=0.
    \end{equation*}
    To this end, let $\omega_n,\omega \in L^2(0,T)$ be such that $\{\omega_n\}$ converges strongly to $\omega$. Arguing similarly to~\eqref{eq:p_omega_bound}, we obtain that
    \begin{equation*}
        \int_0^T (p_{\omega_n}(t)-p_{\omega}(t))^2 dt \leq \int_{\mathbb{R}} \rho_0(x)dx \int_0^T (\dot{z}^*_{\omega_n}(t,x)-\dot{z}^*_\omega(t,x))^2 dt.
    \end{equation*}
    From Proposition~\ref{prp:phi_omega} we have that $z^*_{\omega}(\cdot,x)$ is the unique minimizer for~\eqref{eq:phi_omega_1} for $\rho_0$ a.e. $x\in \mathbb{R}$. For all such $x$, Proposition~\ref{prp:min_stability} yields that $z^*_{\omega_n}(\cdot,x)$ converges strongly to $z^*_{\omega}(\cdot,x)$ in $H^1(0,T)$, so
    \begin{equation*}
      \lim_{n \to \infty}  \int_0^T (\dot{z}^*_{\omega_n}(t,x)-\dot{z}^*_\omega(t,x))^2 dt=0,\quad \rho_0~\text{a.e.}
    \end{equation*}
    Furthermore, bounds~\eqref{eq:min_bounds} yield that
    \begin{equation*}
        \int_0^T (\dot{z}^*_{\omega_n}(t,x)-\dot{z}^*_\omega(t,x))^2 dt \leq \int_0^T 2(\dot{z}^*_{\omega_n}(t,x)^2+\dot{z}^*_\omega(t,x)^2) dt\leq C(\|\omega_n\|_2)+C(\|\omega\|_2).
    \end{equation*}
    Since $\{\omega_n\}$ converges strongly to $\omega$, we have that $\{\omega_n\}$ is bounded, and the dominated convergence theorem yields 
    \begin{equation*}
        \lim_{n \to \infty}\int_{\mathbb{R}} \rho_0(x)dx \int_0^T (\dot{z}^*_{\omega_n}(t,x)-\dot{z}^*_\omega(t,x))^2 dt=0,
    \end{equation*}
    and so
    \begin{equation*}
        \lim_{n \to \infty} \int_0^T (p_{\omega_n}(t)-p_{\omega}(t))^2 dt=0.
    \end{equation*}
\end{proof}

\begin{theorem}\label{thm:equilibrium=min}
    Let $Q\in L^2(0,T)$. Then $\omega^*\in L^2(0,T)$ is an equilibrium price if and only if $\omega^* \in \arg\min\limits_{\omega} I[\omega]$, where $I$ is the functional defined in~\eqref{eq:I}.
\end{theorem}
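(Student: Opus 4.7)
The plan is to combine the concavity and differentiability of $J$ established in Proposition~\ref{prp:J} with the elementary first-order characterization of minimizers of convex, Fr\'echet-differentiable functionals on a Hilbert space. Since $I[\omega]=\langle Q,\omega\rangle_2-J[\omega]$ is the sum of a continuous linear functional and $-J$, and $J$ is concave on $L^2(0,T)$, the functional $I$ is convex on $L^2(0,T)$. Moreover, $J$ is Fr\'echet differentiable everywhere with $\delta J[\omega](t)=\int_{\mathbb{R}}\dot z^*_\omega(t,x)\rho_0(x)\,dx$, so $I$ is also Fr\'echet differentiable with
\begin{equation*}
\delta I[\omega](t)=Q(t)-\int_{\mathbb{R}}\dot z^*_\omega(t,x)\rho_0(x)\,dx, \qquad t\in (0,T).
\end{equation*}

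Next I would identify $\dot z^*_\omega(\cdot,x)$ with the optimal trading rate $\alpha^*_{x,\omega}$. By Proposition~\ref{prp:phi_omega}, the minimizer of~\eqref{eq:phi_omega_1} is unique for Lebesgue a.e.\ $x\in\mathbb{R}$, and since $\rho_0$ is absolutely continuous with respect to Lebesgue measure, the integrand in $\delta J[\omega](t)$ does not depend on the measurable selection; it equals $\int_{\mathbb{R}}\alpha^*_{x,\omega}(t)\rho_0(x)\,dx$, as noted in Remark~\ref{rmk:measurable_selection} and the proof of Proposition~\ref{prp:J}. Therefore the vanishing of $\delta I[\omega^*]$ in $L^2(0,T)$ is exactly the market-clearing condition~\eqref{eq:market_clearing}.

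Finally, I would invoke the standard convex-analysis fact: if $F\colon H\to\mathbb{R}$ is a convex, Fr\'echet-differentiable functional on a Hilbert space $H$, then $\omega^*\in\arg\min F$ if and only if $\delta F[\omega^*]=0$. The ``only if'' direction is just Fermat's rule; the ``if'' direction follows from the convex gradient inequality $F[\omega]\geq F[\omega^*]+\langle\delta F[\omega^*],\omega-\omega^*\rangle_2$. Applying this to $F=I$ with the identification of $\delta I[\omega^*]=0$ above yields the equivalence: $\omega^*$ minimizes $I$ iff $\int_{\mathbb{R}}\alpha^*_{x,\omega^*}(t)\rho_0(x)\,dx=Q(t)$ for a.e.\ $t\in(0,T)$, i.e., iff $\omega^*$ is an equilibrium price.

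There is no serious obstacle; essentially all the analytic work has been done in Propositions~\ref{prp:min_uniqueness}--\ref{prp:J}. The only small point to be careful about is that equilibrium is a priori defined pointwise in $t$ via a possibly non-unique family $\alpha^*_{x,\omega^*}$, whereas the Euler--Lagrange condition $\delta I[\omega^*]=0$ holds in the $L^2(0,T)$ sense; this is reconciled by the a.e.\ uniqueness of minimizers of~\eqref{eq:phi_omega_1} with respect to $\rho_0$, so that the aggregate $\int_{\mathbb{R}}\alpha^*_{x,\omega^*}(t)\rho_0(x)\,dx$ is an unambiguously defined element of $L^2(0,T)$.
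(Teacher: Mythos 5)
Your proposal is correct and follows essentially the same route as the paper's proof: it invokes Proposition~\ref{prp:J} for the convexity and Fr\'echet differentiability of $I$ with $\delta I[\omega]=Q-p_\omega$, identifies $p_{\omega^*}$ with the aggregate trading rate via the $\rho_0$-a.e.\ uniqueness of minimizers from Proposition~\ref{prp:phi_omega}, and concludes by the first-order characterization of minimizers of a convex differentiable functional. The only difference is that you spell out Fermat's rule and the a.e.\ identification slightly more explicitly than the paper does, which is harmless.
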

\begin{proof}
    Let $\omega^* \in L^2(0,T)$. By Proposition~\ref{prp:phi_omega} optimal trading strategies (trajectories) are unique for $\rho_0$ a.e. $x\in \mathbb{R}$. Hence, $\rho_0$ a.e. we have that $\alpha^*_{\omega^*}(t,x)=\dot{z}^*_{\omega^*}(t,x)$, and  
    \begin{equation*}
        \int_{\mathbb{R}}\alpha^*_{\omega^*}(t,x) \rho_0(x)dx=\int_{\mathbb{R}}\dot{z}^*_{\omega^*}(t,x)\rho_0(x)dx=p_{\omega^*}(t).
    \end{equation*}
    From Proposition~\ref{prp:J} we have that $I$ is a convex Fr\'{e}chet differentiable functional with
    \begin{equation*}
        \delta_\omega I[\omega]=Q-p_\omega.
    \end{equation*}
    Thus, $\omega^*\in \arg \min\limits_{\omega} I[\omega]$ if and only if $\delta_\omega I[\omega^*]=0$ or 
    \begin{equation*}
        Q(t)=p_{\omega^*}(t)=\int_{\mathbb{R}}\alpha^*_{\omega^*}(t,x) \rho_0(x)dx,
    \end{equation*}
    which is the definition of the equilibrium price.
\end{proof}

\begin{theorem}\label{thm:min_existence}
    For all $Q\in L^2(0,T)$, the functional $I$ admits a minimizer. Moreover, for every minimizer $\omega^*$, we have that $\omega^*+c_0 Q \in C^1([0,T])$. 

    In particular, for every supply $Q\in L^2(0,T)$ there exists an equilibrium price $\omega^*$, which is as smooth as $Q$, up to $C^1([0,T])$.
\end{theorem}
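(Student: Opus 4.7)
The plan is to combine the direct method of the calculus of variations on $L^2(0,T)$ with the Euler--Lagrange information produced in Proposition~\ref{prp:min_uniqueness}. Proposition~\ref{prp:J} already supplies the three ingredients needed for existence. First, the upper bound $J[\omega]\le C-C\|\omega\|_2^2$ and Cauchy--Schwarz on the linear term give the coercivity
\[
I[\omega]=\int_0^T Q(t)\omega(t)\,dt-J[\omega]\ \ge\ C\|\omega\|_2^2-\|Q\|_2\|\omega\|_2-C.
\]
Second, since $J$ is concave and Fr\'echet differentiable, $-J$ is convex and strongly continuous, hence weakly lower semicontinuous; adding the weakly continuous linear term $\omega\mapsto\int_0^T Q\omega\,dt$ makes $I$ weakly l.s.c. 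Third, any minimizing sequence is bounded by coercivity, admits a weakly convergent subsequence, and its weak limit $\omega^*$ is a minimizer. The ``in particular'' clause then follows from Theorem~\ref{thm:equilibrium=min}, which identifies equilibrium prices with minimizers of $I$.

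For the regularity of an arbitrary minimizer $\omega^*$, I would use the Fr\'echet derivative formula $\delta J[\omega]=p_\omega$ from Proposition~\ref{prp:J} to read off the optimality condition $p_{\omega^*}=Q$. Fixing a Borel measurable selection $z^*_{\omega^*}(t,x)$ as in Remark~\ref{rmk:measurable_selection}, Proposition~\ref{prp:min_uniqueness} says that for $\rho_0$-a.e.\ $x$ the function $a(t,x)=c_0 z^*_{\omega^*}(t,x)+\int_0^t\omega^*(s)\,ds$ lies in $C^2([0,T])$ with $\ddot a(t,x)=V'(z^*_{\omega^*}(t,x))$ and terminal slope $\dot a(T,x)=-g'(z^*_{\omega^*}(T,x))$. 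Writing
\[
\dot a(t,x)=-g'(z^*_{\omega^*}(T,x))-\int_t^T V'(z^*_{\omega^*}(s,x))\,ds,
\]
integrating against $\rho_0$, and using the pointwise identity $\int_{\mathbb{R}}\dot a(t,x)\rho_0(x)\,dx=c_0 p_{\omega^*}(t)+\omega^*(t)=c_0 Q(t)+\omega^*(t)$ yields
\[
\omega^*(t)+c_0 Q(t)\ =\ -\int_{\mathbb{R}} g'(z^*_{\omega^*}(T,x))\rho_0(x)\,dx\ -\ \int_t^T\!\!\int_{\mathbb{R}} V'(z^*_{\omega^*}(s,x))\rho_0(x)\,dx\,ds.
\]
The continuity of $s\mapsto\int_{\mathbb{R}} V'(z^*_{\omega^*}(s,x))\rho_0(x)\,dx$ follows from the uniform H\"older bound in Lemma~\ref{lma:existence_mins}, the compact support of $\rho_0$, the boundedness of $V'$, and dominated convergence; hence the right-hand side is in $C^1([0,T])$, proving $\omega^*+c_0 Q\in C^1([0,T])$.

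The main obstacle is not analytic but measure-theoretic bookkeeping: $z^*_{\omega^*}(t,x)$ is only unique for Lebesgue a.e.\ $x$, and $\dot z^*_{\omega^*}(\cdot,x)$ is a priori only an $L^2$-class in $t$, so one must justify the Fubini exchanges and the pointwise-in-$t$ identity linking $\dot a$, $p_{\omega^*}$, and $\omega^*$. I would handle this by anchoring the computation on the everywhere-continuous quantity $\dot a(\cdot,x)$ together with the Borel measurable selection from Remark~\ref{rmk:measurable_selection}, so that every expression I integrate in $x$ is a genuine (not just a.e.-defined) function of $t$, after which the interchanges and the dominated-convergence step reduce to standard routine checks.
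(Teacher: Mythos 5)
Your proposal is correct and follows essentially the same route as the paper: coercivity from Proposition~\ref{prp:J} plus weak lower semicontinuity of the convex functional $I$ for existence, and then the representation of $\omega^*+c_0Q$ as $\int_{\mathbb{R}}\dot a(t,x)\rho_0(x)\,dx$ with $a$ the $C^2$ adjoint-type variable from Proposition~\ref{prp:min_uniqueness}, whose second derivative $V'(z^*_{\omega^*}(t,x))$ is uniformly bounded, for the $C^1$ regularity. The only cosmetic differences are that you integrate the ODE for $\dot a$ explicitly where the paper differentiates under the integral sign, and that you spell out the measurable-selection bookkeeping the paper leaves implicit.
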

\begin{proof}
    From proposition~\ref{prp:J} we have that 
    \begin{equation*}
        I[\omega]=\int_0^T Q(t)\omega(t)dt-J[\omega]\geq -\|Q\|_2\|\omega\|_2+C\|\omega\|_2^2-C\geq C\|\omega\|_2^2-C,\quad \forall w\in L^2(0,T),
    \end{equation*}
    and $\inf\limits_\omega I[\omega]>-\infty$. Furthermore, let $\{\omega_n\}$ be a minimizing sequence; that is,
    \begin{equation*}
        \lim_{n \to \infty} I[\omega_n]=\inf_\omega I[\omega].
    \end{equation*}
    Then we have that $\{I[\omega_n]\}$ is bounded, and
    \begin{equation*}
        C\|\omega_n\|_2^2-C\leq I[\omega_n] \leq C \Longrightarrow \|\omega_n\|_2 \leq C,\quad \forall n\geq 1.
    \end{equation*}
    Hence, there exists a $\omega^* \in L^2(0,T)$ such that (up to a seubsequence) $\{\omega_n\}$ converges weakly to $\omega^*$, and the convexity of $I$ yields that 
    \begin{equation*}
        I[\omega^*]\leq I[\omega_n] - \langle \delta_\omega I[\omega^*], \omega_n -\omega^* \rangle_2 \Longrightarrow I[\omega^*] \leq \lim_{n \to \infty } I[\omega_n]=\inf_\omega I[\omega].
    \end{equation*}
    Thus, $\omega^* \in \arg\min\limits_{\omega} I[\omega]$.

    Now let $\omega^* \in \arg\min\limits_{\omega} I[\omega]$ be an arbitrary minimizer. Then we have that
    \begin{equation*}
        \delta_\omega I[\omega^*]=Q(t)-\int_{\mathbb{R}}\dot{z}^*_{\omega^*}(t,x)\rho_0(x)dx=0.
    \end{equation*}
    Denote by $a(t,x)=c_0 z^*_{\omega^*}(t,x)+\int_0^t \omega^*(s) ds$. Then from Proposition~\ref{prp:min_uniqueness} we have that $a(\cdot,x)$ is twice continuously differentiable for all $x\in \mathbb{R}$, and 
    \begin{equation*}
    \begin{split}
        |\ddot{a}(t,x)|=&|V'(z^*_{\omega^*}(t,x))|\leq C,\quad \forall t\in [0,T],\\
|\dot{a}(T,x)|=&|g'(z^*_{\omega^*}(t,x))|\leq C,        
    \end{split}
    \end{equation*}
    where we used the global bounds on $V',g'$. Hence, the function $\int_{\mathbb{R}} \dot{a}(\cdot,x) \rho_0(x)dx$  is continuously differentiable with
    \begin{equation*}
        \frac{d}{dt}\int_{\mathbb{R}} \dot{a}(t,x) \rho_0(x)dx= \int_{\mathbb{R}} \ddot{a}(t,x) \rho_0(x)dx= \int_{\mathbb{R}} V'(z^*_{\omega^*}(t,x)) \rho_0(x)dx,\quad t\in [0,T].
    \end{equation*}
    But
    \begin{equation*}
        \int_{\mathbb{R}} \dot{a}(t,x) \rho_0(x)dx=c_0 \int_{\mathbb{R}} \dot{z}^*_{\omega^*}(t,x) \rho_0(x)dx+\omega^*(t)=c_0 Q(t)+\omega^*(t),
    \end{equation*}
    which completes the proof.
\end{proof}

\section{A primal-dual algorithm}\label{sec:alg}

Using the definition~\eqref{eq:phi_omega} of $\phi_\omega$, we first formulate~\eqref{eq:mfg_var} as a saddle point problem. More specifically, we have that
\begin{equation}\label{eq:saddle}
	\begin{split}
		&\inf_\omega \int_0^T \omega(t) Q(t) dt-\int_{\mathbb{R}} \phi_\omega(x) \rho_0(x)dx=\inf_\omega \sup_{\dot{z}_x=\alpha_x,z_x(0)=x} \int_0^T \omega(t) Q(t) dt\\
		&- \int_{\mathbb{R}} \left[ \int_0^T\left( L(z_x(t),\alpha_x(t))+\alpha_x(t)\omega(t) \right) dt+g(z_x(T))\right] \rho_0(x)dx.
	\end{split}
\end{equation}
Next, we discretize~\eqref{eq:saddle} as follows. We consider a uniform time discretization $0=t_0<t_1 < t_2 < \ldots < t_N = T$, and denote the discretized states, price, and supply by 
\begin{equation*}
\begin{split}
    \mathbf{z}_x =&(z_x[0],z_x[1], z_x[2], \ldots, z_x[N])\in \mathbb{R}^{N+1},\\
    \bm{\omega} =& (\omega[0],\omega[1], \omega[2], \ldots, \omega[N-1])\in \mathbb{R}^N \\
    \bm{Q} =& (Q[0],Q[1], Q[2], \ldots, Q[N-1])\in \mathbb{R}^N,
\end{split}
\end{equation*}
where
\begin{equation*}
    z_x[l]=z_x \left(\frac{lT}{N} \right),\quad \omega[l]=\omega \left(\frac{lT}{N} \right), \quad Q[l]=Q \left(\frac{lT}{N} \right),\quad 0\leq l \leq N,
\end{equation*}
and the subscript $x$ denotes the trajectories with the initial state $x\in \mathbb{R}$.

Since the inner optimization problem in~\eqref{eq:saddle} is a trajectory-optimization problem, we use simple but effective direct transcription approach~\cite{betts98survey} for solving it:
\begin{equation}
	\label{eq:saddle_discretized}
	\begin{split}
		\inf_{\bm{\omega}} \sup_{\bm{\alpha}} \frac{T}{N} \sum_{l=0}^{N-1}  {\omega}[l] {Q}[l]- \frac{1}{M}\sum_{m=1}^{M}\left[ \frac{T}{N} \sum_{l=0}^{N-1} \left(L( {z}^{{\alpha}}_{x_m}[l],{\alpha}_{x_m}[l])+ {\alpha}_{x_m}[l] {\omega}[l]\right) +g({z}^{{\alpha}}_{x_m}[N]) \right],
	\end{split}
\end{equation}
where
\begin{equation*}
	\bm{\alpha} = 
	\begin{bmatrix}
		{\alpha}_{x_1}[0] &  {\alpha}_{x_1}[1] & \ldots & {\alpha}_{x_1}[N-1]
		\\
		{\alpha}_{x_2}[0] &  {\alpha}_{x_2}[1] & \ldots & {\alpha}_{x_2}[N-1]
		\\
		\vdots
		\\
		{\alpha}_{x_M}[0] &  {\alpha}_{x_M}[1] & \ldots & {\alpha}_{x_M}[N-1]
	\end{bmatrix}\in \mathbb{R}^{M \times N},
\end{equation*}
is the discretized control, each row representing the controls for the trajectory defined by an initial state $x_m$, and
\begin{equation*}
	{z}^{{\alpha}}_{x_m}[l+1]={z}^{{\alpha}}_{x_m}[l] + \frac{T}{N} {\alpha}_{x_m}[l], \quad {z}^{{\alpha}}_{x_m}[0] = x_m,\quad 0\leq l \leq N-1,\quad 1\leq m \leq M.
\end{equation*}
Moreover, $x_1, x_2, \ldots, x_M$ are samples of initial states drawn from $\rho_0$.

Various saddle-point optimization algorithms can be used to solve~\eqref{eq:saddle}. While we used an Euler discretization of the dynamics, any other method could also be used, e.g., RK4. As in~\cite{nursaude18}, we use a version of the primal-dual hybrid gradient (PDHG) algorithm of Chambolle and Pock~\cite{champock11,champock16} to solve~\eqref{eq:saddle_discretized}. Denoting by
\begin{equation*}
	\begin{split}
		\mathcal{L}(\bm{\omega}, \bm{\alpha})= \frac{T}{N} \sum_{l=0}^{N-1}  {\omega}[l] {Q}[l]- \frac{1}{M}\sum_{m=1}^{M}\left[ \frac{T}{N}\sum_{l=0}^{N-1} \left(L( {z}^{{\alpha}}_{x_m}[l],{\alpha}_{x_m}[l])+ {\alpha}_{x_m}[l] {\omega}[l]\right) +g({z}^{{\alpha}}_{x_m}[N]) \right],
	\end{split}
\end{equation*}
\eqref{eq:saddle_discretized} reduces to
\begin{equation}\label{eq:saddle_L}
	\inf_{\bm{\omega}} \sup_{\bm{\alpha}} \mathcal{L}(\bm{\omega},\bm{\alpha}).
\end{equation}
The PDHG algorithm comprises of the following updates
\begin{equation*}
	\begin{split}
		&\bm{\alpha}^{(k+1)} = \arg\max_{\bm{\alpha}}\mathcal{L}(\bm{\omega}^{(k)},\bm{\alpha}) -\frac{\|\bm{\alpha}-\bm{\alpha}^{(k)}\|^2}{2\tau_{\alpha}},
		\\
		&\overline{\bm{\alpha}}^{(k+1)} = 2\bm{\alpha}^{(k+1)} - \bm{\alpha}^{(k)}, 
		\\ 
		&\bm{\omega}^{(k+1)} = \arg\min_{\bm{\omega}} \mathcal{L}(\bm{\omega},\overline{\bm{\alpha}}^{(k+1)})+\frac{\|\bm{\omega}-\bm{\omega}^{(k)}\|^2}{2\tau_\omega},
	\end{split}
\end{equation*}
where $\tau_\alpha,\tau_\omega>0$ are suitably chosen step sizes, and $(\bm{\omega}^{(0)},\bm{\alpha}^{(0)})$ are chosen arbitrarily. To be consistent with the continuous problem, we use the norms
\begin{equation*}
	\|\bm{\alpha}\|^2=\frac{T}{MN}\sum_{m=1}^M \sum_{l=0}^{N-1} {\alpha}_{x_m}[l]^2=\frac{T}{MN}\|\bm{\alpha}\|^2_2,\quad \|\bm{\omega}\|^2=\frac{T}{N} \sum_{l=0}^{N-1} {\omega}[l]^2=\frac{T}{N}\|\bm{\omega}\|^2_2,
\end{equation*}
so the updates read as
\begin{equation}\label{eq:original_PDHG}
	\begin{split}
		&\bm{\alpha}^{(k+1)} = \arg\max_{\bm{\alpha}}\mathcal{L}(\bm{\omega}^{(k)},\bm{\alpha}) -\frac{T}{2MN\tau_{\alpha}}\|\bm{\alpha}-\bm{\alpha}^{(k)}\|_2^2,
		\\
		&\overline{\bm{\alpha}}^{(k+1)} = 2\bm{\alpha}^{(k+1)} - \bm{\alpha}^{(k)}, 
		\\ 
		&\bm{\omega}^{(k+1)} = \arg\min_{\bm{\omega}} \mathcal{L}(\bm{\omega},\overline{\bm{\alpha}}^{(k+1)})+\frac{T}{2N\tau_\omega}\|\bm{\omega}-\bm{\omega}^k\|_2^2.
	\end{split}
\end{equation}
Recall that the original version of the PDHG algorithm in~\cite{champock11} requires
\begin{enumerate}
    \item convex and proximable $\bm{\omega} \mapsto \mathcal{L}(\bm{\omega},\bm{\alpha})$,
    \item concave and proximable $\bm{\alpha} \mapsto \mathcal{L}(\bm{\omega},\bm{\alpha})$,
    \item a bilinear coupling between $\bm{\omega},\bm{\alpha}$.
\end{enumerate}
The term proximable refers to the ease of solving the first and third optimization steps during the update~\eqref{eq:original_PDHG}; that is, either these steps admit closed-form solutions or can be solved efficiently with a high precision~\cite{champock11},~\cite[Section 6]{parikhboyd14}. In our case, one of the three conditions above is invalid: $\bm{\alpha} \mapsto \mathcal{L}(\bm{\omega},\bm{\alpha})$ is not concave and proximable in general.

Hence, we replace the proximal optimization step in $\bm{\alpha}$ in~\eqref{eq:original_PDHG} by a gradient ascent step:
\begin{equation}\label{eq:algo}
	\begin{split}
		&\bm{\alpha}^{(k+1)} = \bm{\alpha}^{(k)} + \frac{\tau_{\alpha}MN}{T}~\nabla_{\bm{\alpha}} \mathcal{L}(\bm{\omega}^{(k)}, \bm{\alpha}^{(k)}),
		\\
		&\overline{\bm{\alpha}}^{(k+1)} = 2\bm{\alpha}^{(k+1)} - \bm{\alpha}^{(k)}, 
		\\ 
		&\bm{\omega}^{(k+1)} = \arg\min_{\bm{\omega}} \mathcal{L}(\bm{\omega},\overline{\bm{\alpha}}^{(k+1)})+\frac{T}{2N\tau_\omega}\|\bm{\omega}-\bm{\omega}^k\|_2^2.
	\end{split}
\end{equation}
Thus, we arrive at a version of PDHG where smooth parts of $\bm{\alpha}\mapsto \mathcal{L}(\bm{\omega},\bm{\alpha})$ can be linearized within proximal steps~\cite[Algorithm 1]{champock16}. Still, the convergence proof in~\cite{champock16} requires the function $\bm{\alpha}\mapsto \mathcal{L}(\bm{\omega},\bm{\alpha})$ to be concave, which is not the case in general for us.

Despite the non-concavity of $\bm{\alpha}\mapsto \mathcal{L}(\bm{\omega},\bm{\alpha})$, our tests demonstrate a very robust performance of~\eqref{eq:algo}. In particular, we did not encounter convergence issues, did not have to perform hyperparameter tuning, and can match analytic solutions, where available, with high precision (see Section~\ref{sec:experiments}). We have previously observed similar performance for a closely related algorithm for numerically solving mean-field games with nonlocal interactions~\cite{nursaude18}.

The gradient ascent step in $\bm{\alpha}$ is implemented via automatic differentiation, whereas the proximal step in $\bm{\omega}$ admits a closed-form solution. Indeed, the first order optimality conditions with respect to $\bm{\omega}$ yield
\begin{equation*}
	\begin{split}
		{\omega}^{(k+1)}[l]={\omega}^{(k)}[l]+\tau_\omega  \left(\frac{1}{M}\sum_{m=1}^M \overline{{\alpha}}^{(k+1)}_{x_m}[l]-Q[l]\right) ,\quad 0\leq l \leq N-1,
	\end{split}
\end{equation*}
which simplifies~\eqref{eq:algo} to the Algorithm~\ref{alg:main} below.
\begin{algorithm}[H]
\caption{\small{Primal-dual algorithm for a mean-field game price formation model}}
\label{alg:main}
\begin{algorithmic}[1]
\Require Lagrangian $L$, cost function $g$, initial density $\rho_0$, control space $Q$; step sizes $\tau_\alpha,\tau_\omega > 0$.
\State Initialize $\bm{\alpha}^{(0)}$, $\overline{\bm{\alpha}}^{(0)}$, $\bm{\omega}^{(0)}$.
\For{$k = 0, 1, 2, \dots$}
    \State $\bm{\alpha}^{(k+1)} \gets \bm{\alpha}^{(k)} + \dfrac{\tau_{\alpha} MN}{T} \nabla_{\bm{\alpha}} \mathcal{L}(\bm{\omega}^{(k)}, \bm{\alpha})$ (gradient computed via \textbf{automatic differentiation})
    \State $\overline{\bm{\alpha}}^{(k+1)} \gets 2\bm{\alpha}^{(k+1)} - \bm{\alpha}^{(k)}$
    \For{$l = 0$ to $N-1$}
        \State $\omega^{(k+1)}[l] \gets \omega^{(k)}[l] + \tau_\omega \left( \dfrac{1}{M} \sum_{m=1}^{M} \overline{\alpha}^{(k+1)}_{x_m}[l] - Q[l] \right)$
    \EndFor
\EndFor
\end{algorithmic}
\end{algorithm}

\section{Numerical experiments}\label{sec:experiments}

In this section, we perform numerical experiments to examine the performance of our algorithm. We consider two types of examples. First, we consider $V$ and $g$ that are quadratic; that is,
\begin{equation}\label{eq:g_V_quad}
    V(z)=\frac{r_1}{2}(z-y_1)^2,\quad g(z)=\frac{r_2}{2}(z-y_2)^2,\quad z\in \mathbb{R},
\end{equation}
for some $r_1,r_2 \geq 0$, and $y_1,y_2 \in \mathbb{R}$. For such $V,g$, the equilibrium price $\omega^*$ and optimal trajectories $z^*_{\omega^*}$ admit analytic solutions, which is convenient for testing the validity our algorithm.

More specifically, the equilibrium price in this case is given by

\begin{equation}\label{eq:omega_analytic_Q}
    \omega^*(t)=r_2(y_2-\bar{x}_0) +r_1 (T-t)(y_1-\bar{x}_0)-c_0 Q(t)+\int_0^T \left[-(r_2+r_1 T)+r_1 \max\{s,t\} \right]Q(s)ds,
\end{equation}    
where
\begin{equation}\label{eq:initial_mean}
    \bar{x}_0=\int_{\mathbb{R}} x \rho_0(x) dx.
\end{equation}

Once the equilibrium price, $\omega^*$, is known, one can find the optimal trajectories of the agents as follows. Denoting by
\begin{equation}\label{eq:k}
    k=\sqrt{\frac{r_1}{c_0}},
\end{equation}
and
\begin{equation}\label{eq:B}
B=r_2(y_2-y_1)+\int_0^T \omega^*(s) \left[\frac{r_2}{c_0}\cosh k(T-s)+k \sinh k(T-s)\right] ds,
\end{equation}
we have that
\begin{equation}\label{eq:z_analytic_r1=0}
    z^*_{\omega^*}(t,x)=x+\frac{B-r_2(x-y_1)}{c_0+r_2 T}t-\frac{1}{c_0}\int_0^t \omega^*(s) ds,
\end{equation}
when $r_1=0$, and
\begin{equation}\label{eq:z_analytic_r1<>0}
    \begin{split}
        z^*_{\omega^*}(t,x)=&y_1+(x-y_1) \cosh kt+ \frac{B-(x-y_1)(c_0 k \sinh kT+r_2 \cosh kT)}{c_0 k \cosh kT+r_2 \sinh kT} \sinh k t\\
        &-\frac{1}{c_0} \int_0^t \omega^*(s) \cosh k(t-s) d s,
\end{split}
\end{equation}
when $r_1\neq 0$. Appendix~\ref{apdx:analytic} contains more details about the derivation of these expressions.

Next, we consider $V$ and $g$ that are double-well potentials; that is,
\begin{equation}\label{eq:g_V_double_well}
    V(z)=\frac{r_3}{2}(z-y_3)^2(z-y_4)^2,\quad g(z)=\frac{r_4}{2}(z-y_5)^2(z-y_6)^2,\quad z\in \mathbb{R},
\end{equation}
for some $r_3,r_4 \geq 0$, and $y_3,y_4,y_5,y_6 \in \mathbb{R}$. Such $V,g$ have two local minima attracting the agents.

We implement the algorithm in Python and utilize PyTorch~\cite{paszke2017automatic} due to its adeptness at handling tensor operations and facilitating automatic differentiation, which are critical for our optimization tasks. We test two types of supply functions $Q$: sinusoidal and non-smooth. For the algorithm, we initialize ${\omega}$ and ${\alpha}$ randomly from a normal distribution. For the hyperparameters, we choose the number of agents $M = 100$, and the number of time steps $N = 1000$ in the time interval $[0,T]=[0,1]$. Moreover, the control penalty weight $c_0$ is chosen to be $1$. 

Furthermore, we assume that the $M$ agents start at different positions, $\{x_m\}_{m=1}^M$, evenly distributed across the interval $[0,1]$, thereby assuming that $\rho_0$ is the uniform probability measure on the interval $[0,1]$. The parameters $\{y_i\}$ are chosen as follows: $y_1=y_2=0$; $y_3=y_5=0.25$; $y_4=y_6=0.75$. In all cases, we stop after 10,000 iterations of the Algorithm~\ref{alg:main}.

The code to reproduce the examples in this section is available at \href{https://github.com/lnurbek/price-mfg-solver}{https://github.com/lnurbek/price-mfg-solver}.

\subsection{Case One: \(r_1 = 0\), \(r_2 \neq 0\)}

In our first experiment, we take
\begin{equation*}
\begin{split}
    V(z)=0,\quad g(z)=5(z-y_2)^2=5z^2,
\end{split}
\end{equation*}
which corresponds to choosing $r_1 = 0$ and $r_2 = 10$ in~\eqref{eq:g_V_quad}. Additionally, we assume that $Q(t)=\sin 10t$.

\begin{figure}[h]
    \centering
    \begin{subfigure}[t]{0.48\textwidth}
        \centering
        \includegraphics[width=\textwidth]{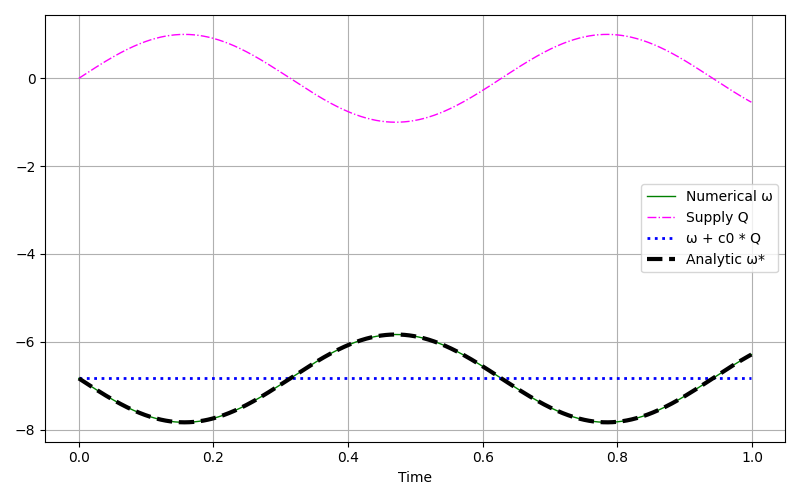}
        \caption{Comparison of numerical and analytic prices: the numerical trajectory (green, solid) closely matches the analytic solution (black, dashed). Also shown are the exogenous supply $Q$ (magenta, dashed and dotted) and the combination curve $\omega+c_0 Q$ (blue, dotted)}
        \label{subfig:case1_w}
    \end{subfigure}
    \hfill
    \begin{subfigure}[t]{0.48\textwidth}
        \centering
        \includegraphics[width=\textwidth]{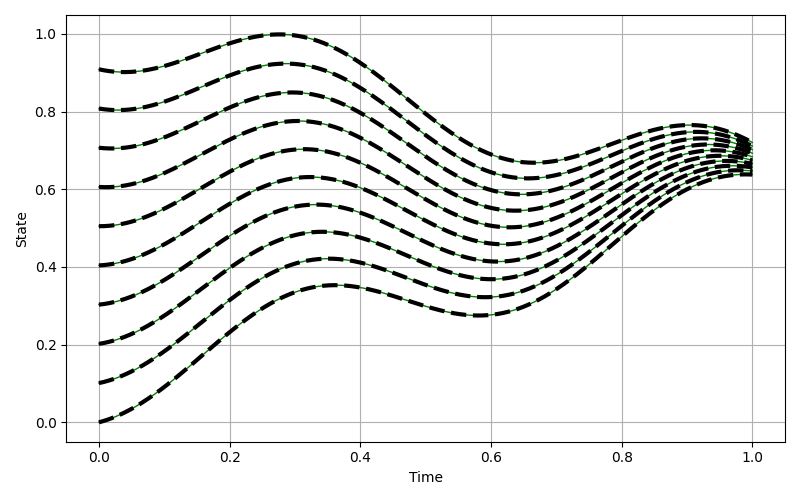}
        \caption{Evolution of state trajectories under the computed equilibrium price. Numerical solutions (green, solid) align with the analytic benchmark (black, dashed), confirming the accuracy of the method.}\label{subfig:case1_z}
    \end{subfigure}
    \caption{Equilibrium price and trajectories for the spatial potential $V(z)=0$, terminal cost $g(z)=5z^2$, and supply function $Q(t)=\sin 10t$.}
    \label{fig:case1}
\end{figure}

In Figure~\ref{fig:case1}, we compare our result against the analytic solution. In particular, Figure~\ref{subfig:case1_w} shows the evolution of the price function $\omega(t)$ over time. The green solid line represents the numerical solution obtained by our method, while the black dashed line indicates the analytic solution. The two curves overlap with an $l_\infty$ error of $1.33\times10^{-14}$, suggesting that our method accurately captures the pricing behavior predicted by theory. The sinusoidal structure of $\omega(t)$ reflects the oscillatory nature of the input signal $Q(t)$; in particular, the trends of $Q(t)$ and $\omega(t)$, or $\omega^*(t)$, are opposite, which is consistent with the real-life economic intuition that price decreases as supply increases.

Furthermore, in Figure~\ref{subfig:case1_z}, we investigate the trajectories $z(t)$ of a sample of $10$ agents. The trajectory of each agent is shown with a pair of lines: a green solid one for the numerical solution and a dashed black one for the analytic one. These trajectories illustrate how agents adjust their states over time under the influence of the price $\omega(t)$. The $l_\infty$ error of $1.29\times10^{-14}$ validates the consistency of the numerical results with the theoretical prediction.

\subsection{Case Two: \(r_1 \neq 0\), \(r_2 = 0\)}\label{subsec:experiments_case2}

Here, we set $r_1 = 10$, $r_2$ = 0 in~\eqref{eq:g_V_quad}; hence, the terminal cost is deactivated, but there is a running cost $V$:
\begin{equation*}
\begin{split}
    V(z)=5(z-y_1)^2=5z^2,\quad g(z)=0.
\end{split}
\end{equation*}
Additionally, we use a realization of a Wiener process $W_t$ as a supply function; that is, $Q(t)=W_t(o)$ for some realization $o$. The purpose of such choice for $Q$ is to show that our numerical method handles non-smooth supply functions, which are covered by the theory. Indeed, realizations of Wiener processes are H\"{o}lder continuous but not differentiable with probability one, and Theorem~\ref{thm:min_existence} asserts that there should exist an equilibrium price $\omega^*$ such that $\omega^*+c_0 Q \in C^1([0,T])$. Hence, we should recover an $\omega^*$ that has the same regularity as $Q$; that is, H\"{o}lder continuous and non-differentiable.

\begin{figure}[h]
    \centering
    \begin{subfigure}[t]{0.48\textwidth}
        \centering
        \includegraphics[width=\textwidth]{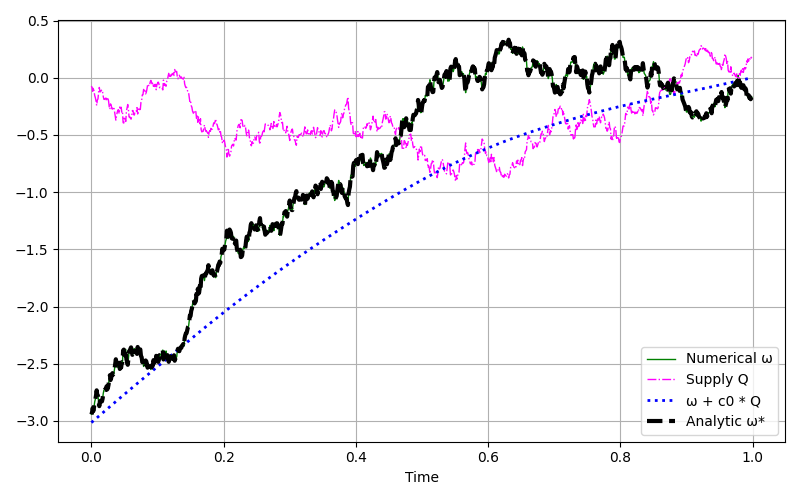}
        \caption{Comparison of numerical and analytic prices: the numerical trajectory (green, solid) closely matches the analytic solution (black, dashed). Also shown are the non-smooth exogenous supply $Q$ (magenta, dashed and dotted) and the \textit{smooth} combination curve $\omega+c_0 Q$ (blue, dotted)}
        \label{subfig:case2_w}
    \end{subfigure}
    \hfill
    \begin{subfigure}[t]{0.48\textwidth}
        \centering
        \includegraphics[width=\textwidth]{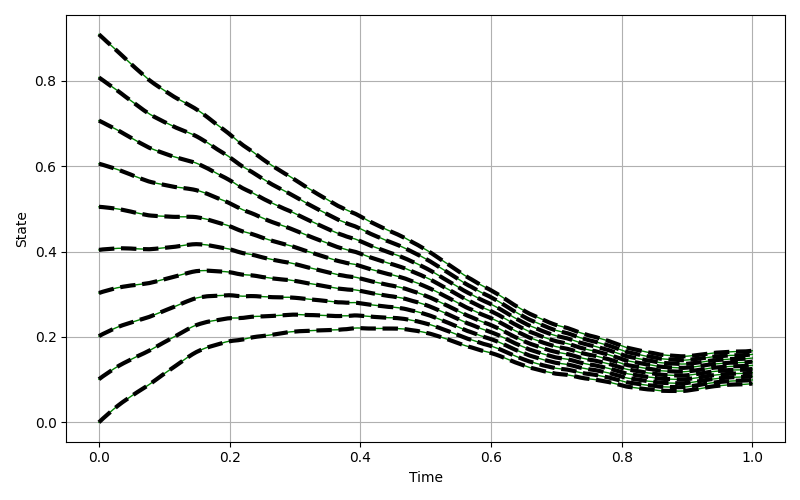}
        \caption{Evolution of state trajectories under the computed equilibrium price. Numerical solutions (green, solid) align with the analytic benchmark (black, dashed), confirming the accuracy of the method.}\label{subfig:case2_z}
    \end{subfigure}
    \caption{Equilibrium price and trajectories for the spatial potential $V(z)=5z^2$, terminal cost function $g(z)=0$, and supply function $Q(t)=W_t(o)$.}\label{fig:case2}
\end{figure}

Figure~\ref{fig:case2} illustrates the results in this case. In particular, Figure~\ref{subfig:case2_w} presents the evolution of the price $\omega(t)$ over time. As before, the green solid curve corresponds to the numerically optimized price, while the black dashed curve represents the analytic solution.

The jagged nature of both curves reflects the non-smoothness of the supply function. Despite the non-smoothness, the numerical and analytic curves align closely with an $l_\infty$ error of $1.33\times10^{-3}$, indicating that our method is robust under non-smooth supply inputs and is aligned with the theoretical predictions. Additionally, the combination curve $\omega+c_0 Q$ (blue, dotted) appears to be smooth as predicted by the theory.

In Figure~\ref{subfig:case2_z}, a sample of 10 agents starts from evenly spaced positions. The consistency between the numerical and analytic solutions is reflected in the $l_\infty$ error of $2.32\times10^{-4}$.

\subsection{Case Three: \(r_3 = 0\), \(r_4 \neq 0\)}\label{subsec:experiments_case3}

In this experiment, we consider a setting where the running cost is inactive and the agents' behavior is only determined by the terminal double-well potential~\eqref{eq:g_V_double_well} with $r_3=0, r_4=50$; that is,
\begin{equation*}
    V(z)=0,\quad g(z)=25(z-y_5)^2(z-y_6)^2=25(z-0.25)^2(z-0.75)^2.
\end{equation*}
Furthermore, we take again a sinusoidal supply function $Q(t)=\sin 10t$. 

Figure~\ref{subfig:case3_w} shows the evolution of the price, which exhibits a smooth oscillatory behavior in response to the periodic fluctuations of the supply function. As before, the price and supply have opposite trends.

In Figure~\ref{subfig:case3_z}, we plot the trajectories given a sample of $10$ agents. Since there is no running cost to penalize their actions throughout the entire time interval, agents primarily respond to the terminal cost. The result is a clear bifurcation: agents converge into two distinct clusters -- one near $y_5=0.25$, the other close to $y_6=0.75$, reflecting the two wells of $g(z)$. Furthermore, although agents are distributed uniformly at the initial time, there are more agents clustered near $y_6=0.75$ at the final time. Indeed, the supply initially increases encouraging the agents to move closer to $y_6=0.75$. Hence, when the supply starts wearing off, more agents prefer to move towards the closer well centered at $y_6=0.75$.

\begin{figure}[h]
    \centering
    \begin{subfigure}[t]{0.48\textwidth}
        \centering
        \includegraphics[width=\textwidth]{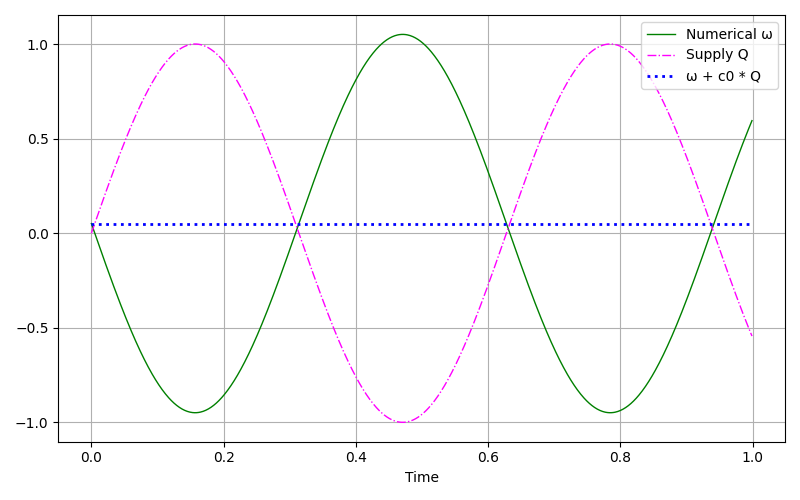}
        \caption{Numerical price trajectory $\omega(t)$ (green, solid) alongside exogenous supply $Q$ (magenta, dashed and dotted) and the combination curve $\omega+c_0 Q$ (dotted, blue). The dynamics highlight the interaction between the price and supply signals.}
        \label{subfig:case3_w}
    \end{subfigure}
    \hfill
    \begin{subfigure}[t]{0.48\textwidth}
        \centering
        \includegraphics[width=\textwidth]{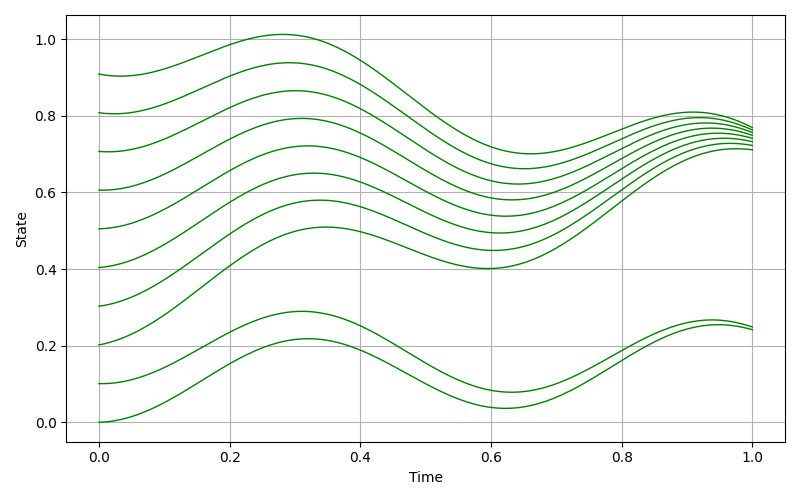}
        \caption{Evolution of state trajectories under the computed equilibrium price, showing consistent convergence patterns across initial conditions.}
        \label{subfig:case3_z}
    \end{subfigure}
    \caption{Equilibrium price and trajectories for the spatial potential $V(z)=0$, terminal cost function $g(z)=25(z-0.25)^2(z-0.75)^2$, and supply function $Q(t)=\sin 10t$.}\label{fig:case3}
\end{figure}

\subsection{Case Four: \(r_3 \neq 0\), \(r_4 = 0\)}

In this case, we activate the running double-well potential while letting the terminal cost to be $0$:
\begin{equation*}
    V(z)=25(z-y_3)^2(z-y_4)^2=25(z-0.25)^2(z-0.75)^2,\quad g(z)=0,
\end{equation*}
which corresponds to choosing $r_3=50,r_4=0$ in~\eqref{eq:g_V_double_well}. We test our method using two distinct types of $Q(t)$: a smooth sinusoidal function $Q(t) = \sin 10t$, and a realization of a Wiener process $Q(t) = W_t(o)$, as performed in Section~\ref{subsec:experiments_case2}.

\subsubsection{Smooth Supply: $Q(t) = \sin 10t$}\label{subsubsec:experiments_case4-1}

Again, Figure~\ref{subfig:case4-1_w} follows a smooth oscillating path, reflecting the oscillatory behavior of $Q(t) = \sin 10t$. Figure~\ref{subfig:case4-1_z} shows the trajectories of $10$ agents. Unlike the example in Section~\ref{subsec:experiments_case3}, the double-well structure influences the agents continuously over time. Hence, the agents cluster and approach the well centers $y_3=0.25$ and $y_4=0.75$ earlier.

\begin{figure}[h]
    \centering
    \begin{subfigure}[t]{0.48\textwidth}
        \centering
        \includegraphics[width=\textwidth]{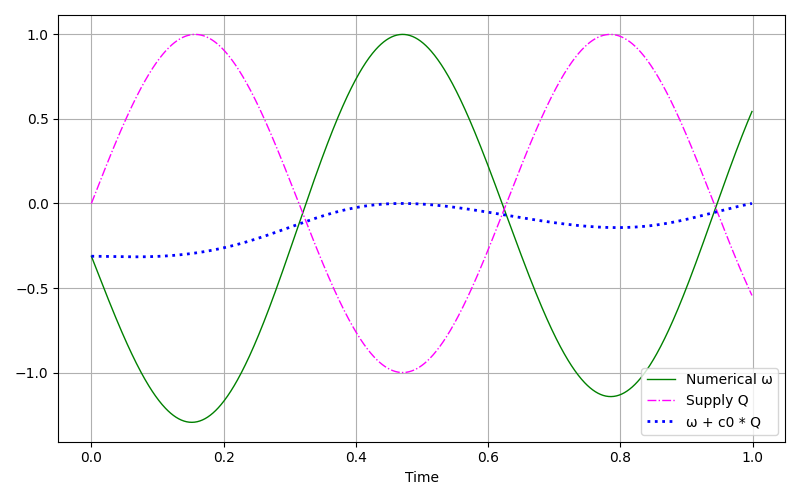}
        \caption{Numerical price trajectory $\omega(t)$ (green, solid) alongside exogenous supply $Q$ (magenta, dashed and dotted) and the combination curve $\omega+c_0 Q$ (dotted, blue). The dynamics highlight the interaction between the price and supply signals.}
        \label{subfig:case4-1_w}
    \end{subfigure}
    \hfill
    \begin{subfigure}[t]{0.48\textwidth}
        \centering
        \includegraphics[width=\textwidth]{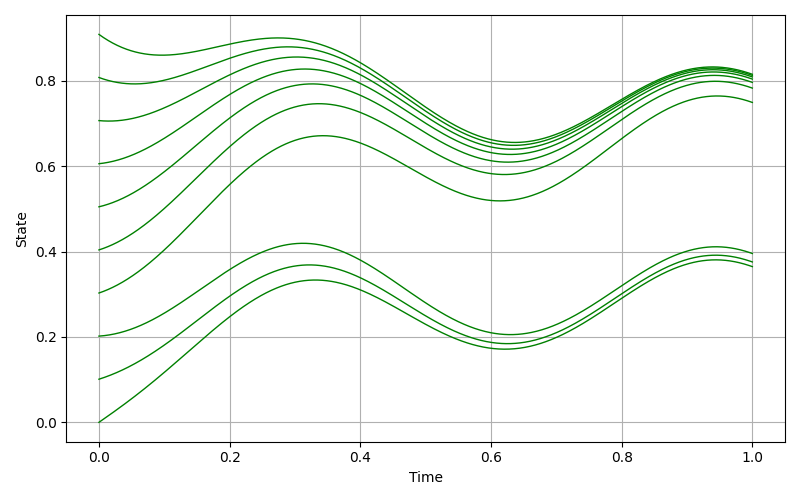}
        \caption{Evolution of state trajectories under the computed equilibrium price, showing earlier convergence patterns due to constantly active double-well spatial potential.}
        \label{subfig:case4-1_z}
    \end{subfigure}
    \caption{Equilibrium price and trajectories for the spatial potential $V(z)=25(z-0.25)^2(z-0.75)^2$, terminal cost function $g(z)=0$, and supply function $Q(t)=\sin 10t$.}\label{fig:case4-1}
\end{figure}

\subsubsection{Non-smooth Supply: $Q(t) = W_t(o)$}

As in Section~\ref{subsec:experiments_case2}, Figure~\ref{subfig:case4-2_w} shows that the optimal price $\omega(t)$ mirrors the non-smoothness of the supply function, and the combination function $\omega+c_0Q $ appears to be smooth. Interestingly, Figure~\ref{subfig:case4-2_z} reveals that the agents do not bifurcate into two clusters as seen in Section~\ref{subsubsec:experiments_case4-1}. Instead, their paths collapse toward a single, narrow region, despite the active double-well running cost. This phenomenon is explained by the fact that, in this case, the supply function has an overall downward trend, and all agents find more beneficial to move toward the lower well centered at $y_3=0.25$.

\begin{figure}[h]
    \centering
    \begin{subfigure}[t]{0.48\textwidth}
        \centering
        \includegraphics[width=\textwidth]{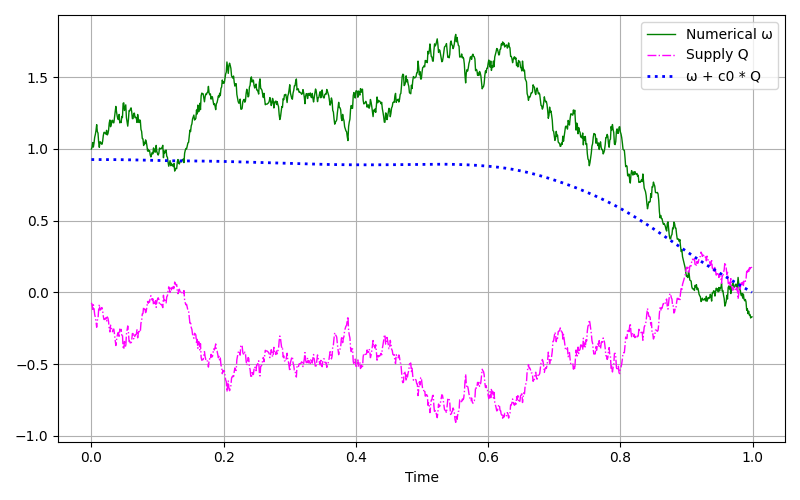}
        \caption{Numerical price trajectory $\omega(t)$ (green, solid) alongside non-smooth exogenous supply $Q$ (magenta, dashed and dotted) and the \textit{smooth} combination curve $\omega+c_0 Q$ (dotted, blue). The dynamics highlight the interaction between the price and supply signals.}
        \label{subfig:case4-2_w}
    \end{subfigure}
    \hfill
    \begin{subfigure}[t]{0.48\textwidth}
        \centering
        \includegraphics[width=\textwidth]{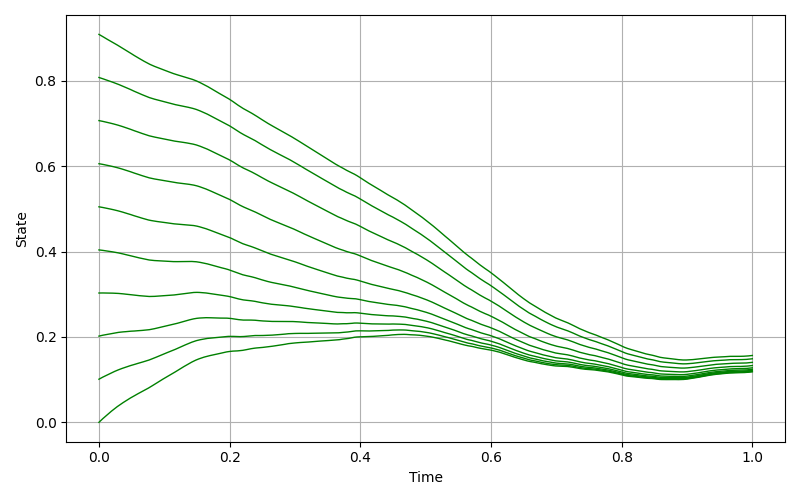}
        \caption{Evolution of state trajectories under the computed equilibrium price, showing a unimodal convergence pattern due to the overall downward trend of the supply function.}
        \label{subfig:case4-2_z}
    \end{subfigure}
    \caption{Equilibrium price and trajectories for the spatial potential $V(z)=25(z-0.25)^2(z-0.75)^2$, terminal cost $g(z)=0$, and supply function $Q(t)=W_t(o)$.}\label{fig:case4-2}
\end{figure}

\section{Conclusion}\label{sec:conclusion}

In this work, we develop a simple yet efficient primal-dual algorithm for computing equilibrium prices in mean-field game price formation models. Our main theoretical contribution establishes that equilibrium prices are optimal solutions to a suitable variational problem, leading to the derivation of Algorithm~\ref{alg:main} with strong convergence properties. In particular, we show the existence of equilibrium prices for all supply functions with sufficient regularity.

Our computational framework leverages modern automatic differentiation techniques to implement Algorithm~\ref{alg:main} in a modular and flexible manner, eliminating the need to manually derive adjoint equations while maintaining computational efficiency. While our current framework successfully leverages automatic differentiation for gradient computations, several promising directions emerge for integrating advanced deep learning techniques with our primal-dual methodology, including generating feedback policies~\cite{ruthotto2019machine, lin2020apacnet} as well as kernel decoupling techniques for models with non-local interactions~\cite{liu2021computational, agrawal2022random, chow2022numerical}.

Future work will address multiple assets, state and control constraints, stochasticity, and other real-world features. 
Since our framework is built on the analysis of the single-agent control problem~\eqref{eq:oc_single_general}, these extensions naturally appear at that level:  
\begin{itemize}
    \item models with $d$ assets yield $d$-dimensional trajectories, $z(t)\in\mathbb{R}^d$,  
    \item models with state and control constraints yield constrained problems, $(z(t),\alpha(t))\in\Omega\times U$,  
    \item stochastic models lead to stochastic trajectories, with a value function
    \[
        \phi_\omega(x)=\inf \, \mathbb{E}_{z(0)=x}\!\left[\int_0^T \big(L(z(t),\alpha(t))+\alpha(t)\omega(t)\big)dt + g(z(T))\right].
    \]
\end{itemize}

The concavity of $\omega\mapsto J[\omega]$ persists (as $\phi_\omega$ remains an infimum of affine functionals), while differentiability must be revisited in the presence of non-unique or stochastic trajectories. To handle this, we expect techniques from perturbation analysis~\cite{bonnansshapiro00perturbation}, measure-theoretic formulations of trajectory dynamics~\cite{bonnans2023lagrangian}, and regularity results for value functions~\cite{cannarsa2018bfc11smoothnessconstrainedsolutionscalculus} to be instrumental.

\section*{Acknowledgments}

We are grateful to the anonymous referees for their careful reading of the manuscript and their insightful suggestions.


\begin{thebibliography}{10}

\bibitem{agrawal2022random}
Sudhanshu Agrawal, Wonjun Lee, Samy~Wu Fung, and Levon Nurbekyan.
\newblock Random features for high-dimensional nonlocal mean-field games.
\newblock {\em Journal of Computational Physics}, 459:111136, 2022.

\bibitem{alharbi_et_al19price}
Abdulrahman Alharbi, Tigran Bakaryan, Rafael Cabral, Sara Campi, Nicholas
  Christoffersen, Paolo Colusso, Odylo Costa, Serikbolsyn Duisembay, Rita
  Ferreira, {Diogo A.} Gomes, Shibei Guo, Julian Gutierrezpineda, Phebe Havor,
  Michele Mascherpa, Simone Portaro, {Ricardo de Lima} Ribeiro, Fernando
  Rodriguez, Johan Ruiz, Fatimah Saleh, Calum Strange, Teruo Tada, Xianjin
  Yang, and Zofia Wr{\'o}blewska.
\newblock A price model with finitely many agents.
\newblock {\em Bulletin of the Portuguese Mathematical Society}, December 2019.
\newblock KAUST Repository Item: Exported on 2020-10-01 Acknowledgements:
  Applied Mathematics Summer School.

\bibitem{ashrafyan22potential}
Yuri Ashrafyan, Tigran Bakaryan, Diogo Gomes, and Julian Gutierrez.
\newblock The potential method for price-formation models.
\newblock In {\em 2022 IEEE 61st Conference on Decision and Control (CDC)},
  pages 7565--7570, 2022.

\bibitem{ashrafyan2021duality}
Yuri Ashrafyan, Tigran Bakaryan, Diogo Gomes, and Julian Gutierrez.
\newblock A duality approach to a price formation {MFG} model.
\newblock {\em Minimax Theory Appl.}, 8(1):1--36, 2023.

\bibitem{ashrafyan2022variational}
Yuri Ashrafyan, Tigran Bakaryan, Diogo Gomes, and Julian Gutierrez.
\newblock A variational approach for price formation models in one dimension.
\newblock {\em Commun. Math. Sci.}, 22(1):227--255, 2024.

\bibitem{ashrafyan2024fullydiscrete}
Yuri Ashrafyan and Diogo Gomes.
\newblock A fully-discrete semi-lagrangian scheme for a price formation mfg
  model.
\newblock {\em Dynamic Games and Applications}, 15(2):503--533, 2025.

\bibitem{aubin90set}
Jean-Pierre Aubin and H\'{e}l\`ene Frankowska.
\newblock {\em Set-valued analysis}, volume~2 of {\em Systems \& Control:
  Foundations \& Applications}.
\newblock Birkh\"{a}user Boston, Inc., Boston, MA, 1990.

\bibitem{betts98survey}
John~T. Betts.
\newblock Survey of numerical methods for trajectory optimization.
\newblock {\em Journal of Guidance, Control, and Dynamics}, 21(2):193--207,
  1998.

\bibitem{bonnans2023lagrangian}
J.~Fr\'{e}d\'{e}ric Bonnans, Justina Gianatti, and Laurent Pfeiffer.
\newblock A lagrangian approach for aggregative mean field games of controls
  with mixed and final constraints.
\newblock {\em SIAM Journal on Control and Optimization}, 61(1):105--134, 2023.

\bibitem{bonnans2021schauder}
J.~Fr{\'e}d{\'e}ric Bonnans, Saeed Hadikhanloo, and Laurent Pfeiffer.
\newblock Schauder estimates for a class of potential mean field games of
  controls.
\newblock {\em Applied Mathematics {\&} Optimization}, 83(3):1431--1464, Jun
  2021.

\bibitem{bonnans2023discrete}
J.~Fr{\'e}d{\'e}ric Bonnans, Pierre Lavigne, and Laurent Pfeiffer.
\newblock Discrete potential mean field games: duality and numerical
  resolution.
\newblock {\em Mathematical Programming}, 202(1):241--278, Nov 2023.

\bibitem{bonnansshapiro00perturbation}
J.~Fr\'ed\'eric Bonnans and Alexander Shapiro.
\newblock {\em Perturbation analysis of optimization problems}.
\newblock Springer Series in Operations Research. Springer-Verlag, New York,
  2000.

\bibitem{cannarsa2018bfc11smoothnessconstrainedsolutionscalculus}
Piermarco Cannarsa, Rossana Capuani, and Pierre Cardaliaguet.
\newblock $\bf{C^{1,1}}$-smoothness of constrained solutions in the calculus of
  variations with application to mean field games.
\newblock 2018.

\bibitem{champock11}
Antonin Chambolle and Thomas Pock.
\newblock A first-order primal-dual algorithm for convex problems with
  applications to imaging.
\newblock {\em J. Math. Imaging Vision}, 40(1):120--145, 2011.

\bibitem{champock16}
Antonin Chambolle and Thomas Pock.
\newblock On the ergodic convergence rates of a first-order primal-dual
  algorithm.
\newblock {\em Math. Program.}, 159(1-2, Ser. A):253--287, 2016.

\bibitem{chow2022numerical}
Yat~Tin Chow, Samy~Wu Fung, Siting Liu, Levon Nurbekyan, and Stanley Osher.
\newblock A numerical algorithm for inverse problem from partial boundary
  measurement arising from mean field game problem.
\newblock {\em Inverse Problems}, 39(1):014001, 2022.

\bibitem{paola19}
A.~{De Paola}, V.~{Trovato}, D.~{Angeli}, and G.~{Strbac}.
\newblock A mean field game approach for distributed control of thermostatic
  loads acting in simultaneous energy-frequency response markets.
\newblock {\em IEEE Transactions on Smart Grid}, 10(6):5987--5999, Nov 2019.

\bibitem{gomes2023machine0}
Diogo Gomes, Julian Gutierrez, and Mathieu Lauri{\`e}re.
\newblock Machine learning architectures for price formation models.
\newblock {\em Applied Mathematics {\&} Optimization}, 88(1):23, May 2023.

\bibitem{gomes23machine}
Diogo Gomes, Julian Gutierrez, and Mathieu Laurière.
\newblock Machine learning architectures for price formation models with common
  noise.
\newblock In {\em 2023 62nd IEEE Conference on Decision and Control (CDC)},
  pages 4345--4350, 2023.

\bibitem{gomes23random}
Diogo Gomes, Julian Gutierrez, and Ricardo Ribeiro.
\newblock A random-supply mean field game price model.
\newblock {\em SIAM Journal on Financial Mathematics}, 14(1):188--222, 2023.

\bibitem{gomes2016acc}
Diogo Gomes, Laurent Lafleche, and Levon Nurbekyan.
\newblock A mean-field game economic growth model.
\newblock In {\em 2016 American Control Conference (ACC)}, pages 4693--4698,
  2016.

\bibitem{nurbekyan_oc_hilbert}
Diogo Gomes and Levon Nurbekyan.
\newblock On the minimizers of calculus of variations problems in {H}ilbert
  spaces.
\newblock {\em Calc. Var. Partial Differential Equations}, 52(1-2):65--93,
  2015.

\bibitem{gomessaude'18}
Diogo~A. Gomes and Jo{\~a}o Sa{\'u}de.
\newblock A mean-field game approach to price formation.
\newblock {\em Dynamic Games and Applications}, 11(1):29--53, Mar 2021.

\bibitem{graber2021weak}
P.~Jameson Graber, Alan Mullenix, and Laurent Pfeiffer.
\newblock Weak solutions for potential mean field games of controls.
\newblock {\em Nonlinear Differential Equations and Applications NoDEA},
  28(5):50, Jul 2021.

\bibitem{HCM07}
M.~Huang, P.~E. Caines, and R.~P. Malham\'e.
\newblock Large-population cost-coupled {LQG} problems with nonuniform agents:
  individual-mass behavior and decentralized {$\epsilon$}-{N}ash equilibria.
\newblock {\em IEEE Trans. Automat. Control}, 52(9):1560--1571, 2007.

\bibitem{HCM06}
M.~Huang, R.~P. Malham\'e, and P.~E. Caines.
\newblock Large population stochastic dynamic games: closed-loop
  {M}c{K}ean-{V}lasov systems and the {N}ash certainty equivalence principle.
\newblock {\em Commun. Inf. Syst.}, 6(3):221--251, 2006.

\bibitem{LasryLions06a}
Jean-Michel Lasry and Pierre-Louis Lions.
\newblock Jeux \`a champ moyen. {I}. {L}e cas stationnaire.
\newblock {\em C. R. Math. Acad. Sci. Paris}, 343(9):619--625, 2006.

\bibitem{LasryLions06b}
Jean-Michel Lasry and Pierre-Louis Lions.
\newblock Jeux \`a champ moyen. {II}. {H}orizon fini et contr\^{o}le optimal.
\newblock {\em C. R. Math. Acad. Sci. Paris}, 343(10):679--684, 2006.

\bibitem{LasryLions2007}
Jean-Michel Lasry and Pierre-Louis Lions.
\newblock Mean field games.
\newblock {\em Jpn. J. Math.}, 2(1):229--260, 2007.

\bibitem{lavigne2023generalized}
Pierre Lavigne and Laurent Pfeiffer.
\newblock Generalized conditional gradient and learning in potential mean field
  games.
\newblock {\em Applied Mathematics {\&} Optimization}, 88(3):89, Oct 2023.

\bibitem{lin2020apacnet}
Alex~Tong Lin, Samy~Wu Fung, Wuchen Li, Levon Nurbekyan, and Stanley~J. Osher.
\newblock Alternating the population and control neural networks to solve
  high-dimensional stochastic mean-field games.
\newblock {\em Proc. Natl. Acad. Sci. USA}, 118(31):Paper No. e2024713118, 10,
  2021.

\bibitem{liu2021computational}
Siting Liu, Matthew Jacobs, Wuchen Li, Levon Nurbekyan, and Stanley~J Osher.
\newblock Computational methods for first-order nonlocal mean field games with
  applications.
\newblock {\em SIAM Journal on Numerical Analysis}, 59(5):2639--2668, 2021.

\bibitem{nursaude18}
Levon Nurbekyan and J.~Sa\'{u}de.
\newblock Fourier approximation methods for first-order nonlocal mean-field
  games.
\newblock {\em Port. Math.}, 75(3-4):367--396, 2018.

\bibitem{parikhboyd14}
Neal Parikh and Stephen Boyd.
\newblock Proximal algorithms.
\newblock {\em Found. Trends Optim.}, 1(3):127–239, January 2014.

\bibitem{paszke2017automatic}
Adam Paszke, Sam Gross, Soumith Chintala, Gregory Chanan, Edward Yang, Zachary
  DeVito, Zeming Lin, Alban Desmaison, Luca Antiga, and Adam Lerer.
\newblock Automatic differentiation in pytorch.
\newblock In {\em NIPS 2017 Workshop on Autodiff}, 2017.

\bibitem{ruthotto2019machine}
Lars Ruthotto, Stanley~J. Osher, Wuchen Li, Levon Nurbekyan, and Samy~Wu Fung.
\newblock A machine learning framework for solving high-dimensional mean field
  game and mean field control problems.
\newblock {\em Proceedings of the National Academy of Sciences},
  117(17):9183--9193, 2020.

\end{thebibliography}

\appendix

\section{Analytic solutions}\label{apdx:analytic}

Here, we derive the analytic expression for the equilibrium price and corresponding optimal trajectories when $V,g$ are given by~\eqref{eq:g_V_quad}. From Proposition~\ref{prp:min_uniqueness} we have that optimal trajectories satisfy the following system of ordinary differential equations
\begin{equation}\label{eq:z_ode}
    \begin{cases}
            c_0\ddot{z}(t,x)+\dot{\omega}(t)=r_1(z(t,x)-y_1),~t\in (0,T),\\
            z(0,x)= x,~c_0\dot{z}(T,x)+\omega(T)=-r_2(z(T,x)-y_2).
        \end{cases}
\end{equation}
Assuming that $\omega$ is an equilibrium price and introducing
\begin{equation*}
    \Lambda(t)=\int_{\mathbb{R}} x \rho_0(x) dx+\int_0^t Q(s) ds,\quad t\in [0,T],
\end{equation*}
we obtain that
\begin{equation*}
    \dot{\Lambda}(t)=Q(t)=\int_{\mathbb{R}}\dot{z}(t,x) \rho_0(x) dx=\frac{d}{dt} \int_{\mathbb{R}}z(t,x) \rho_0(x) dx,\quad \forall t\in (0,T),
\end{equation*}
and
\begin{equation*}
    \Lambda(0)=\int_{\mathbb{R}} x \rho_0(x) dx= \int_{\mathbb{R}} z(0,x) \rho_0(x) dx.
\end{equation*}
Thus, we have that
\begin{equation*}
    \Lambda(t)= \int_{\mathbb{R}} z(t,x) \rho_0(x) dx,\quad \forall t\in [0,T],
\end{equation*}
and integrating~\eqref{eq:z_ode} with respect to $\rho_0$, we obtain
\begin{equation}\label{eq:omega_ODE}
\begin{cases}
    \dot{\omega}(t)=-c_0\ddot{\Lambda}(t)+r_1(\Lambda(t)-y_1),~t\in (0,T),\\
    \omega(T)=-c_0\dot{\Lambda}(T)-r_2(\Lambda(T)-y_2).
\end{cases}
\end{equation}
The latter can be easily integrated:
\begin{equation*}
    \omega(t)=-c_0\dot{\Lambda}(T)-r_2(\Lambda(T)-y_2)-\int_t^T \left[-c_0\ddot{\Lambda}(s)+r_1(\Lambda(s)-y_1)\right] ds.
\end{equation*}

Once $\omega$ is known,~\eqref{eq:z_ode} can also be integrated. Indeed, the general solution of the second-order ordinary differential equation~\eqref{eq:z_ode} is given by
\begin{equation*}
    z(t,x)=y_1+z_{\text{hom}}(t,x)-\frac{1}{c_0}\int_0^t \omega(s)\cosh k(t-s) ds,
\end{equation*}
where $k=\sqrt{\frac{r_1}{c_0}}$, and $z_{\text{hom}}$ is a solution to the homogeneous equation
\begin{equation*}
    \ddot{z}-k^2 z  =0.
\end{equation*}
Furthermore, we have that
\begin{equation}\label{eq:z_gen_sol}
    \dot{z}(t,x)=\dot{z}_{\text{hom}}(t,x)-\frac{\omega(t)}{c_0}-\frac{k}{c_0}\int_0^t \omega(s)\sinh k(t-s) ds,
\end{equation}
so initial-terminal conditions in~\eqref{eq:z_ode} reduce to
\begin{equation}\label{eq:z_boundary}
\begin{cases}
    z_{\text{hom}}(0,x)=x-y_1,\\
    c_0\dot{z}_{\text{hom}}(T,x)+r_2 z_{\text{hom}}(T,x)=B,
\end{cases}
\end{equation}
where, for simplicity, we denote by
\begin{equation*}
    B=r_2(y_2-y_1)+\int_0^T \omega(s) \left[\frac{r_2}{c_0}\cosh k(T-s)+k \sinh k(T-s)\right] ds.
\end{equation*}
There are two possibilities.
\begin{enumerate}
    \item $r_1=0$. In this case, we have that $k=0$, and
    \begin{equation*}
        \ddot{z}_{\text{hom}}(t,x)=0 \Longrightarrow z_{\text{hom}}(t,x)=c_1(x)+c_2(x)t.
    \end{equation*}
    Furthermore,~\eqref{eq:z_boundary} yields
    \begin{equation*}
        z_{\text{hom}}(t,x)=x-y_1+\frac{B-r_2(x-y_1)}{c_0+r_2 T}t,
    \end{equation*}
    and
    \begin{equation*}
        z(t,x)=x+\frac{B-r_2(x-y_1)}{c_0+r_2 T}t-\frac{1}{c_0}\int_0^t \omega(s) ds.
    \end{equation*}
    \item $r_1\neq 0$. In this case, we have that $k \neq 0$, and
    \begin{equation*}
        z_{\text{hom}}(t,x)=c_3(x) \cosh kt+c_4(x) \sinh kt.
    \end{equation*}
    Furthermore,~\eqref{eq:z_boundary} yields
    \begin{equation*}
        z_{\text{hom}}(t,x)=(x-y_1) \cosh kt+  \frac{B-(x-y_1)(c_0 k \sinh kT+r_2 \cosh kT)}{c_0 k \cosh kT+r_2 \sinh kT} \sinh kt,
    \end{equation*}
    and
    \begin{equation*}
    \begin{split}
        z(t,x)=&y_1+(x-y_1) \cosh kt+ \frac{B-(x-y_1)(c_0 k \sinh kT+r_2 \cosh kT)}{c_0 k \cosh kT+r_2 \sinh kT} \sinh k t\\
        &-\frac{1}{c_0} \int_0^t \omega(s) \cosh k(t-s) d s.
    \end{split}
    \end{equation*}
\end{enumerate}

\end{document}